\documentclass{amsart}

\usepackage{esvect} 
\usepackage{tikz}
\usepackage{tikz-cd}


\newcommand{\myholim}[1]{\mathbin{\operatorname*{holim\ }_{#1}^{}}}

\newcommand{\myhocolim}[1]{\mathbin{\operatorname*{hocolim}_{#1}^{}}}

\DeclareMathOperator{\Fun}{Fun}
\DeclareMathOperator{\cof}{cof}
\DeclareMathOperator{\id}{id}
\DeclareMathOperator{\gr}{gr}

\DeclareMathOperator{\Map}{Map}
\DeclareMathOperator{\Res}{Res}

\newcommand{\DD}{\mathbb{D}}
\DeclareMathOperator{\Ind}{Ind}

\usepackage{amsthm}

    \theoremstyle{plain}
    \newtheorem{theorem}{Theorem}[section]
    \newtheoremstyle{TheoremNum}
        {}{}              
        {\itshape}                      
        {}                              
        {\bfseries}                     
        {.}                             
        { }                             
        {\thmname{#1}\thmnote{ \bfseries #3}}
    \theoremstyle{TheoremNum}

    \newtheoremstyle{LemmaNum}
        {}{}              
        {\itshape}                      
        {}                              
        {\bfseries}                     
        {.}                             
        { }                             
        {\thmname{#1}\thmnote{ \bfseries #3}}
    \theoremstyle{TheoremNum}

\theoremstyle{plain}

\newtheorem{lemma}[theorem]{Lemma}
\newtheorem{proposition}[theorem]{Proposition}

\newtheorem{corollary}[theorem]{Corollary}

\newtheorem*{proposition*}{Proposition}
\newtheorem*{corollary*}{Corollary}

\theoremstyle{definition}
\newtheorem{definition}[theorem]{Definition}

\newtheorem{remark}[theorem]{Remark}

\begin{document}

\title{The $v_n$-periodic Goodwillie tower on Wedges and Cofibres}

%
%

\author[Lukas Brantner]{Lukas Brantner}
\address{Harvard University, Department of Mathematics, 1 Oxford Street, 02138 Cambridge, Massachusetts, USA}
\email{brantner@maths.ox.ac.uk}
\curraddr{Merton College,  
         Oxford University,
Oxford, OX1 4JD, 
United Kingdom}

\author[Gijs Heuts]{Gijs Heuts}
\address{Harvard University, Department of Mathematics, 1 Oxford Street, 02138 Cambridge, Massachusetts, USA}
\email{g.s.k.s.heuts@uu.nl}
\curraddr{Mathematical Institute,
         Utrecht University,
         Budapestlaan 6,
         Utrecht, 3584 CD,
         The Netherlands}




\begin{abstract}
We introduce general methods to analyse the Goodwillie tower of the identity functor on a wedge $X \vee Y$ of spaces  (using the Hilton--Milnor theorem) and on the cofibre $\cof(f)$ of a map $f\colon X \rightarrow Y$. We deduce some consequences for $v_n$-periodic homotopy groups: whereas the Goodwillie tower is finite and converges in periodic homotopy when evaluated on spheres (Arone--Mahowald),  we show that neither of these statements remains true for wedges and Moore spaces.\vspace{-8pt}
\end{abstract}

\maketitle


\section{Introduction and main results} 
Recently, Behrens and Rezk \cite{BR} have provided a new perspective on the calculation of $v_h$-periodic homotopy groups of spheres by relating the Bousfield--Kuhn functor to topological Andr\'{e}-Quillen cohomology. Their result applies to the class of spaces for which the \emph{$v_h$-periodic Goodwillie tower converges} (see Definition \ref{definition:convergence}). This naturally raised the question of which spaces are contained in this class. In 1998, Arone and Mahowald \cite{2} established that spheres satisfy the desired hypothesis (see Theorem \ref{thm:AM} below), but beyond this, knowledge was scarce.

In this paper, we introduce new methods for the analysis of the Goodwillie tower on a wedge of spaces or on the cofibre of a map.
We use them to prove that several natural classes of spaces have \textit{divergent} $v_h$-periodic Goodwillie towers. 
In particular, the $v_h$-periodic homotopy groups of these spaces cannot be completely recovered from the TAQ-based Lie algebra models of Behrens and Rezk (cf. \cite{BR2}).
Our methods in fact provide a useful tool in the study of $v_h$-periodic spaces through spectral Lie algebras.
For example, Theorem \ref{thm:HMGoodwillie} was originally used in \cite{heuts} to establish a more direct relation between the Bousfield--Kuhn functor and spectral Lie algebras which does not suffer from the aforementioned convergence issues.

Given a pointed space $X$, we can evaluate the \emph{Goodwillie tower of the identity} \mbox{on $X$} (see \cite{1}) and obtain a diagram
\[ X \rightarrow \cdots \rightarrow P_2 X \rightarrow P_1 X. \]
The first space $P_1 X$ is simply given by $QX = \Omega^\infty\Sigma^\infty X$, and for $n \geq 2$, the $n^{\mathrm{th}}$ \textit{Goodwillie layer} is the homotopy fibre $D_n X$ of the map $P_n X \rightarrow P_{n-1}X$. By work of Johnson and Arone--Mahowald (cf. \cite{Joh}, \cite{2}), it can be expressed as the infinite loop space of the spectrum  
\[\mathbf{D}_n X = (\partial_n \mathrm{id} \wedge X^{\wedge n})_{h\Sigma_n}\] where $\partial_n \mathrm{id}$ is the Spanier-Whitehead dual of the suspended \emph{partition complex}. For simply-connected (and more generally nilpotent) spaces $X$, the natural map 
\begin{equation*}
X \longrightarrow \myholim{n}  P_n X
\end{equation*}
is known to be an equivalence -- we say the Goodwillie tower \emph{converges on $X$}.

Recall that for pointed connected spaces $X_1, \ldots, X_k$, the Hilton--Milnor theorem asserts the existence of a weak equivalence
\begin{equation*}
\sideset{}{'}\prod_{w \in \mathbf{L}_k} \Omega \Sigma \bigl( w(X_1,\ldots, X_k)\bigr) \xrightarrow{ \phantom{abefgh} H \phantom{abcfgh} } \Omega\Sigma (X_1 \vee \cdots \vee  X_k).
\end{equation*}
Here $\mathbf{L}_k$ denotes an ordered set of Lie words $w$ forming a basis for the free (ungraded) Lie algebra on $k$  generators $x_1, \ldots, x_k$. 
We evaluate such a word on a $k-$tuple of spaces $X_1,\ldots,X_k$ by letting the bracket act as a smash - for example $[x_1,[x_2,x_2]](X_1,X_2,X_3) = X_1 \wedge X_2 \wedge X_2$. Finally, $\prod '$ denotes the \emph{weak infinite product}, i.e. the filtered colimit of products indexed over the finite subsets of $\mathbf{L}_k$.

Unstably, wedge sums of spaces are often harder to understand than smash products. The general strategy is to use the equivalence $H$ to compute invariants of the wedge sum on the right in terms of the corresponding invariants of the various smash products on the left.
This technique has been employed to express the Goodwillie \textit{layers} $D_n$ evaluated on a wedge sum in terms of the layers $D_n$ evaluated on various related smash products:

\begin{theorem}[Arone--Kankaanrinta,  cf. Theorem 0.1. in \cite{3}]
For any collection of pointed connected spaces $X_1,\ldots, X_k$, there is an equivalence
\[ \prod_{\sum n_i = n}  \ \ \ \  \sideset{}{}\prod_{\substack{d|\gcd(n_i)\\ w\in B(\frac{n_1}{d}, \dots , \frac{n_k}{d})}} \ \ \ \Omega D_d\left( \Sigma w(X_1 , \ldots, X_k)\right) \xrightarrow{\sim}\Omega   D_n( \Sigma X_1 \vee \cdots \vee\Sigma X_k). 
\]
Here $B(\frac{n_1}{d}, \dots , \frac{n_k}{d})$ is the set of words in the basis $\mathbf{L}_k$ involving the $i^{th}$ letter $\frac{n_i}{d}$ times.
\end{theorem}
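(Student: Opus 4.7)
The plan is to decompose both sides at the spectrum level using the Johnson-Arone-Mahowald formula $D_n(Z) \simeq \Omega^\infty(\partial_n\id \wedge Z^{\wedge n})_{h\Sigma_n}$, and to reduce the theorem to a $\Sigma_n$-equivariant splitting of the partition complex restricted to Young subgroups.

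Set $Y = X_1 \vee \cdots \vee X_k$. The smash power $(\Sigma Y)^{\wedge n}$ decomposes $\Sigma_n$-equivariantly as a wedge over sequences $f\colon [n] \to [k]$, whose $\Sigma_n$-orbits are indexed by compositions $(n_1,\ldots,n_k)$ of $n$ with stabilizers the Young subgroups $\Sigma_{n_1}\times\cdots\times\Sigma_{n_k}$. Since $\Omega^\infty$ takes coproducts of spectra to products of infinite loop spaces, we obtain
\[
\Omega D_n(\Sigma Y) \simeq \prod_{\sum n_i = n} \Omega\Omega^\infty\bigl(\partial_n\id \wedge (\Sigma X_1)^{\wedge n_1} \wedge \cdots \wedge (\Sigma X_k)^{\wedge n_k}\bigr)_{h(\Sigma_{n_1}\times\cdots\times\Sigma_{n_k})}.
\]
It therefore suffices to produce, for each composition, a $\prod_i \Sigma_{n_i}$-equivariant splitting
\[
\Res^{\Sigma_n}_{\prod_i \Sigma_{n_i}}(\partial_n\id) \simeq \bigvee_{\substack{d \mid \gcd(n_i)\\ w \in B(n_1/d,\ldots,n_k/d)}} \Ind_{\Sigma_d}^{\prod_i \Sigma_{n_i}}(\partial_d\id),
\]
in which $\Sigma_d \hookrightarrow \prod_i \Sigma_{n_i}$ is the embedding realizing cyclic permutation of the $d$ copies of the smash word $w(X_1,\ldots,X_k)$ inside $X_1^{\wedge n_1}\wedge\cdots\wedge X_k^{\wedge n_k}$. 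Granted this splitting, smashing with $(\Sigma X_1)^{\wedge n_1}\wedge\cdots\wedge(\Sigma X_k)^{\wedge n_k}$, taking homotopy orbits, and applying $\Omega\Omega^\infty$ converts each induced summand into $\Omega D_d(\Sigma w(X_1,\ldots,X_k))$ by the Arone-Mahowald formula, completing the identification.

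To establish the equivariant splitting of $\partial_n\id$, one can view it as a spectrum-level lift of the character formulas of Klyachko and Kraskiewicz-Weyman for the restriction of the Lie representation $\mathrm{Lie}_n$ to Young subgroups: indeed, $\partial_n\id$ is the Spanier-Whitehead dual of a shift of the partition complex $|\Pi_n|$, whose top homology is $\mathrm{Lie}_n$, and the restriction of $\mathrm{Lie}_n$ to $\prod_i \Sigma_{n_i}$ is a sum of induced representations indexed by precisely the pairs $(d,w)$ appearing in the theorem. Alternatively, one can extract the splitting directly from the Hilton-Milnor equivalence by comparing the $(n_1,\ldots,n_k)$-weight graded pieces of $\Omega\Sigma(X_1\vee\cdots\vee X_k)$ computed via the two sides. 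The main obstacle is lifting either of these combinatorial decompositions to a genuine $\Sigma_n$-equivariant splitting of the partition complex at the spectrum level, and identifying the embedding of each $\Sigma_d$ into $\prod_i\Sigma_{n_i}$ as the cyclic shift prescribed by the corresponding Lie word $w$.
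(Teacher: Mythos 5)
Your primary route reduces the theorem to an alleged $\prod_i\Sigma_{n_i}$-equivariant splitting of spectra
\begin{equation*}
\Res^{\Sigma_n}_{\prod_i\Sigma_{n_i}}\partial_n\id \ \simeq\ \bigvee_{\substack{d\mid\gcd(n_i)\\ w\in B(n_1/d,\ldots,n_k/d)}}\Ind^{\prod_i\Sigma_{n_i}}_{\Sigma_d}\partial_d\id,
\end{equation*}
and this cannot be correct: the left side is a spectrum of (nonequivariant) dimension $1-n$, while $\Ind^{\prod\Sigma_{n_i}}_{\Sigma_d}\partial_d\id$ has dimension $1-d$, so the degrees do not match for any proper divisor $d<n$. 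Already for $n=2$, $(n_1,n_2)=(1,1)$ the left side is $\mathbb{S}^{-1}$ while the right side would be $\mathbb{S}^{0}$. The issue is that the Arone--Kankaanrinta equivalence is not a genuine equivariant splitting of the partition complex: the suspension coordinates in $(\Sigma X_1)^{\wedge n_1}\wedge\cdots\wedge(\Sigma X_k)^{\wedge n_k}$ sit in a different equivariant position than those in $(\Sigma w(X_1,\ldots,X_k))^{\wedge d}$, and the two sides only become equivalent after smashing with these suspension spectra and passing to homotopy orbits. This is precisely why the theorem is stated only for suspensions of connected spaces (as the paper also records when invoking Theorem 0.1 of \cite{3} in the proof of Lemma \ref{lem:filtrationGoodwillie}), and why extending it to arbitrary spectra requires an extra argument via Goodwillie's classification of homogeneous functors. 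So smashing your proposed splitting with $(\Sigma X_1)^{\wedge n_1}\wedge\cdots$ and taking orbits would not produce the correct summands $\Omega D_d(\Sigma w(X_1,\ldots,X_k))$: the suspensions come out wrong.

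Your alternative route, comparing $(n_1,\ldots,n_k)$-weight graded pieces under the Hilton--Milnor equivalence, is the one actually close to Arone--Kankaanrinta's argument (and is the strategy the present paper lifts to the tower level in Theorem \ref{thm:HMGoodwillie}), but you do not carry it out, and you yourself flag the lift to a genuine splitting as an open obstacle. Since that obstacle is in fact insurmountable at the equivariant spectrum level, and the alternative route is not executed, the proposal has a genuine gap; the fix is to abandon the equivariant splitting and instead identify the layers via Lemma \ref{lem:key} and Lemma \ref{lem:smashpowers} applied to the Hilton--Milnor decomposition, taking fibres over consecutive stages.
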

We lift this result concerning the layers $D_n$ to the level of towers. Essentially, we will show that each component
\[
\Omega \Sigma \bigl( w(X_1,\ldots, X_k)\bigr) \xrightarrow{ \phantom{abefgh} H_w \phantom{abcfgh} } \Omega\Sigma (X_1 \vee \cdots \vee  X_k)
\]
of the Hilton--Milnor map can be refined to a map of towers
\[
\Omega P_{n}(\Sigma w(X_1,\ldots, X_k)) \xrightarrow{ \phantom{abefgh} H_{w,n} \phantom{abcfgh} } \Omega P_{n|w|}(\Sigma(X_1 \vee \cdots \vee X_k)),
\]
i.e., a sequence of maps as written which are natural in $n$. Here $|w|$ denotes the length of a word $w$. We use these maps to assemble the Goodwillie tower on wedge sums as follows:

\begin{theorem}[\ref{thm:HMGoodwillie}]
Fix pointed connected spaces $X_1, \ldots, X_k$.
The Hilton--Milnor map refines to an equivalence of towers:
\[
\sideset{}{'}\prod_{w\in \mathbf{L}_k}\Omega P_{\lfloor \frac{n}{|w|}\rfloor} \Bigl(\Sigma w(X_1, \ldots, X_k) \Bigr) \xrightarrow{\sim} \Omega P_n (\Sigma X_1 \vee \cdots \vee \Sigma X_k).
\]
\end{theorem}

We prove this result by extending the known comparison results between single- and multivariable Goodwillie calculus in Section \ref{sec:calculus}. 

Let $\mathcal{S}_*$ denote the category of pointed spaces and consider a functor of $k$ variables 
\begin{equation*}
F: \mathcal{S}_*^{\times k} \longrightarrow \mathcal{S}_*
\end{equation*}
which preserves weak equivalences and filtered colimits. We can apply (single-variable) Goodwillie calculus to such a functor to obtain $n$-excisive approximations $P_n F$ for all $n \geq 0$. However, we can also apply multivariable calculus and form approximations `in each variable separately'. Given a tuple $(n_1, \ldots, n_k)$ of positive integers, we may form a Goodwillie approximation $P_{\vv n}:=P_{n_1, \ldots, n_k}F$ which is $n_i$-excisive in the $i^{th}$ variable.
The key Lemma \ref{towersonwedges}, which is of some independent interest, allows us to compute single-variable Goodwillie towers on wedges in terms of associated multi-variable towers.

We apply Theorem \ref{thm:HMGoodwillie} to $v_h$-periodic homotopy theory. First we recall the basic setup. Fix a prime $p$ and a natural number $h$. Everything that follows will implicitly be localised at $p$. Recall that a \emph{$v_h$ self-map} of a finite pointed space $V$ is a map $v: \Sigma^d V \rightarrow V$ which induces an isomorphism in $K(h)_*$ and is nilpotent in $K(i)_*$ for $i \neq h$. Here $K(i)_*$ denotes the (reduced) $i^{th}$ Morava $K$-theory at the prime $p$. A finite pointed space $V$ is \emph{of type $h$} if $K(i)_* V = 0$ for $i < h$ and $K(h)_*V \neq 0$. Mitchell \cite{mitchell} established that finite type $h$ spaces exist for every $h \geq 0$. The periodicity theorem of Hopkins-Smith (see Theorem $9$ of \cite{6}) guarantees that every type $h$ space admits a $v_h$ self-map after suspending it sufficiently many times.

From now on, we fix a finite pointed space $V$ of type $h$ together with a $v_h$-self map $v: \Sigma^d V \rightarrow V$. For any pointed space $X$, one can define the \emph{$v_h$-periodic homotopy groups} of $X$ with coefficients in $V$ by considering the homotopy groups of the space
$\mathrm{Map}_*(V, X)$
and inverting the action of $v$ by precomposition. More precisely, one considers the system of spaces
\begin{equation*}
\mathrm{Map}_*(V,X) \xrightarrow{v^*} \mathrm{Map}_*(\Sigma^d V,X) \xrightarrow{v^*} \mathrm{Map}_*(\Sigma^{2d}V,X) \longrightarrow \cdots.
\end{equation*}
One can think of this sequence as defining a spectrum $\Phi_v X$ with constituent spaces $(\Phi_v X)_{kd} = \mathrm{Map}_*(V,X)$ for every $k \geq 0$ and with structure maps
\begin{equation*}
v^*: \mathrm{Map}_*(V,X) \rightarrow \Omega^d\mathrm{Map}_*(V,X).
\end{equation*}
The $v_h$-periodic homotopy groups of $X$ with coefficients in $V$ are then the homotopy groups of this spectrum $\Phi_v X$. This functor $\Phi_v$ is called the \emph{telescopic functor} associated to $v$. Up to equivalence, 
the value of $\Phi_v X$ is independent of the choice of self-map $v$ by Corollary 3.7 of \cite{6}, although it does still depend on the chosen space $V$. Observe that the homotopy groups of $\Phi_v$ are periodic with period $d$. Also, it is clear that $\Phi_v$ preserves filtered colimits and finite homotopy limits. It is possible to take a certain homotopy limit over choices of coefficient complexes $V$ and thus define a functor $\Phi$, the \emph{Bousfield--Kuhn functor}, which is independent of any choices (see \cite{kuhn} for a comprehensive overview, or \cite{kuhnmorava} for a more original reference). 

It is useful to know that the functor $\Phi_v$ takes values in the category $\mathrm{Sp}_{T(h)}$ of \emph{$T(h)$-local spectra} (see Theorem 4.2 of \cite{kuhn}). Here $T(h)$ is the telescope of a $v_h$ self-map on a finite $p$-local type $h$ spectrum. Again, this localisation does not depend on choices by the results of Hopkins and Smith.

\begin{definition}\label{definition:convergence}
For $X$ a pointed space we say \emph{the $v_h$-periodic Goodwillie tower of $X$ converges} if the map
\begin{equation*}
\Phi_v X \rightarrow  \myholim{n}  \ \Phi_v P_n X
\end{equation*}
is an equivalence. 
\end{definition}

This definition is easily shown to be independent of $V$, although this will not concern us here. 
In Section $4$ of \cite{2}, Arone and Mahowald establish this convergence for spheres:

\begin{theorem}[Arone--Mahowald]
\label{thm:AM}
The $v_h$-periodic Goodwillie tower of $S^j$ converges for every $j \geq 1$. Moreover the tower is \emph{finite}, meaning it becomes constant at a finite stage.
\end{theorem}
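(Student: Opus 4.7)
The plan is to reduce finiteness (and hence convergence) of the tower to showing that all but finitely many Goodwillie layers vanish under $\Phi_v$. Since $\Phi_v$ preserves finite homotopy limits, the fibre sequence $D_n S^j \to P_n S^j \to P_{n-1} S^j$ yields a fibre sequence after applying $\Phi_v$, so whenever $\Phi_v D_n S^j \simeq \ast$ the corresponding map in the tower is a $\Phi_v$-equivalence; bounded vanishing then makes the $\Phi_v$-tower constant from some stage on, giving both finiteness and convergence at once.

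The layer formula gives $\mathbf{D}_n S^j \simeq (\partial_n \mathrm{id} \wedge S^{nj})_{h\Sigma_n}$, so the analysis reduces to a $p$-local study of $\partial_n \mathrm{id}$ as a $\Sigma_n$-spectrum. I would proceed in two steps. First, a transfer argument on $\Sigma_n$ shows that the $p$-completion of $\mathbf{D}_n S^j$ is contractible whenever $n$ is not a power of $p$: for such $n$ one can exhibit the relevant $\Sigma_n$-fixed-point spectrum as a retract of a piece whose stabilisers have order divisible by a prime $q \neq p$, making the $p$-local contribution null. Second, for $n = p^k$, one uses the rank filtration of the partition lattice, equivalently the decomposition of $\partial_{p^k}\mathrm{id}$ indexed by the poset of subgroups of $(\FF_p)^k$, to identify $\mathbf{D}_{p^k} S^j$ $p$-locally with a Steinberg summand of a suspension spectrum on $B(\FF_p)^k$, twisted by a line bundle depending on $j$.

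The finiteness conclusion then follows from the classical $T(h)$-local vanishing of such Steinberg summands on classifying spaces of elementary abelian $p$-groups once the rank exceeds $h$. This is a consequence of the Ravenel-Wilson computation of $K(h)^\ast(B(\FF_p)^k)$, or equivalently of Hopkins-Kuhn-Ravenel character theory, which shows that $K(h)$-cohomology of such classifying spaces detects at most rank-$h$ subgroups. Combined with step one, this bounds the indices $n$ at which the layer can possibly contribute to $\Phi_v$ by some explicit function of $h$ and $j$.

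The main obstacle is the Steinberg identification in step two: unwinding the rank filtration of the partition complex and matching it against the cellular structure of $B(\FF_p)^k$ is a delicate piece of equivariant bookkeeping, requiring careful tracking of the $\mathrm{GL}_k(\FF_p)$-action on the subgroup poset, of suspension shifts, and of the Steinberg idempotent. Once this identification is in hand, the $T(h)$-local vanishing becomes a purely chromatic input rather than one coming from Goodwillie calculus.
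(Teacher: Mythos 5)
The paper does not prove Theorem \ref{thm:AM}; it is quoted as a result of Arone--Mahowald \cite{2} and used as a black box throughout, so there is no ``paper's own proof'' to compare your sketch against. Judged on its own merits, your outline of the layer analysis is a fair summary of what Arone--Mahowald actually do: the transfer/norm argument showing $\mathbf D_n S^j$ is $p$-locally trivial unless $n$ is a power of $p$, the identification of $\mathbf D_{p^k} S^j$ with a Steinberg summand of a (Thom space over a) classifying space $B(\mathbb F_p)^k$, and the $T(h)$-local vanishing of such summands for $k>h$ via the Ravenel--Wilson / Hopkins--Kuhn--Ravenel computation of $K(h)^*B(\mathbb F_p)^k$. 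That part is correct.

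The gap is in the very first paragraph, where you claim that ``bounded vanishing then makes the $\Phi_v$-tower constant from some stage on, giving both finiteness and convergence at once.'' Vanishing of $\Phi_v D_n S^j$ for $n>N$ does show, by the fibre-sequence argument, that the tower $\{\Phi_v P_n S^j\}$ stabilises at stage $N$, hence that $\myholim{n}\Phi_v P_n S^j\simeq\Phi_v P_N S^j$. But this does not imply that $\Phi_v S^j\rightarrow\Phi_v P_N S^j$ is an equivalence. The equivalence $S^j\simeq\myholim{n}P_n S^j$ is an \emph{infinite} homotopy limit, and $\Phi_v$ only commutes with finite homotopy limits, so you cannot pull $\Phi_v$ inside. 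Equivalently, you would need to show that $\Phi_v$ of the fibre $F_N=\mathrm{fib}(S^j\rightarrow P_N S^j)$ is contractible, and this does not follow formally from knowing that $\Phi_v$ kills each of the individual layers $D_n S^j$ for $n>N$: there is no general statement that $\Phi_v$ of a pro-highly-connected space is trivial. This paper's own Theorem \ref{thm:wedgesofspheres} illustrates that the ordinary tower and the $\Phi_v$-tower can behave quite differently. Closing this gap is a substantial part of the Arone--Mahowald argument; they establish that $S^j\rightarrow P_{p^h}S^j$ (or $P_{2p^h}S^j$) is a $\Phi_v$-equivalence by a separate induction using the $p$-local EHP fibrations, anchored at $S^1$, together with their layer computation. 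Your proposal omits this step entirely.
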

In Section \ref{sec:3} and \ref{sec:5}, we will prove that both statements fail on two natural classes of spaces:
\begin{theorem}[\ref{thm:wedgesofspheres}]
The $v_h$-periodic Goodwillie tower is infinite and fails to converge on wedges of spheres (of dimension at least $2$). 
\end{theorem}
\begin{theorem}[\ref{thm:Moorespaces}]
The $v_1$-periodic Goodwillie tower of a Moore space $S^\ell/p$ is infinite and fails to converge (for $p$ odd and $\ell\geq 5$).
\end{theorem}
To prove this last theorem, 
we will analyse the Goodwillie layers on the cofibre of a given map $f: X \rightarrow Y$ of pointed spaces. Given a spectrum $Z$, we will write $\mathbf{D}_n(Z)=(\partial_n \id \wedge Z^{\wedge n})_{h\Sigma_n}$ for $\partial_n \id $ the Spanier-Whitehead dual of the  $n^{th}$ suspended partition complex. If $Z$ is the suspension spectrum of a space, then $\Omega^\infty \mathbf{D}_n(Z)$ gives the $n^{\mathrm{th}}$ layer of the Goodwillie tower of said space. We will often abuse notation and denote a space and its suspension spectrum by the same symbol. 
The crucial tool in our considerations is a filtration of the spectrum $\mathbf{D}_n(\cof(f))$, which is established in Lemma \ref{lem:filtrationGoodwillie}.

Finally, in the proof of Theorem \ref{lem:splittower} we make the following observation, which may be of some independent interest:

\begin{proposition}
Suppose $X$ is a finite type $h$ space with a $v_h$ self-map $\Sigma^d X \rightarrow X$. Then the tower $\{\Phi_v P_n(\Sigma^2X)\}_{n \geq 1}$ splits, meaning that there are equivalences
\begin{equation*}
\Phi_v P_n(\Sigma^2 X) \simeq \bigoplus_{k=1}^n \Phi_v D_k(\Sigma^2 X)
\end{equation*}
natural in $n$.
\end{proposition}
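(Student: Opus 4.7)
The plan is to proceed by induction on $n$, with the case $n=1$ automatic since $P_0 = *$ gives $\Phi_v P_1(\Sigma^2 X) \simeq \Phi_v D_1(\Sigma^2 X)$. Suppose a compatible splitting $\Phi_v P_{n-1}(\Sigma^2 X) \simeq \bigoplus_{k=1}^{n-1} \Phi_v D_k(\Sigma^2 X)$ has been established. Applying the exact functor $\Phi_v$ to the Goodwillie fibre sequence $D_n(\Sigma^2 X) \to P_n(\Sigma^2 X) \to P_{n-1}(\Sigma^2 X)$ yields a fibre sequence of $T(h)$-local spectra classified by a $k$-invariant
\[
\kappa_n \colon \Phi_v P_{n-1}(\Sigma^2 X) \longrightarrow \Sigma\,\Phi_v D_n(\Sigma^2 X).
\]
Using the inductive splitting, it suffices to show each component $\kappa_n^{(k)} \colon \Phi_v D_k(\Sigma^2 X) \to \Sigma\,\Phi_v D_n(\Sigma^2 X)$ is null for $1 \leq k \leq n-1$; the required section is then constructed from the splittings of the individual cofibre sequences, and naturality in $n$ follows from the compatibility of these sections with the maps in the tower.

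The first step in analysing $\kappa_n^{(k)}$ is to identify each term in spectral terms. Each $D_m(\Sigma^2 X) = \Omega^\infty \mathbf{D}_m(\Sigma^2 X)$ is an infinite loop space, so by Kuhn's theorem that the Bousfield-Kuhn functor restricted to infinite loop spaces agrees with $T(h)$-localisation, one gets $\Phi_v D_m(\Sigma^2 X) \simeq L_{T(h)}\bigl(\mathbf{D}_m(\Sigma^2 X) \wedge DV\bigr)$ where $DV$ is the Spanier-Whitehead dual of $V$. Hence $\kappa_n^{(k)}$ is a morphism between $T(h)$-localisations of extended power spectra on the finite type $h$ space $X$, compatibly periodified by the self-map $v$. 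By Arone-Ching, the connecting data of the Goodwillie tower of the identity is governed by the composition in the Lie cooperad $\partial_\ast \mathrm{id}$; tracking this composition produces a factorisation of $\kappa_n^{(k)}$ through natural transformations between orbit spectra of $\Sigma_j$-equivariant smash powers of $X$. One then invokes the $T(h)$-local Tate vanishing theorem (for any finite $G$ and $T(h)$-local spectrum $Z$, $Z^{tG} \simeq 0$) to argue that these operadic composition maps factor through vanishing Tate constructions, giving $\kappa_n^{(k)} \simeq 0$.

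The main obstacle is the precise identification of $\kappa_n^{(k)}$ as factoring through a Tate construction; the operadic bookkeeping here is delicate. A cleaner alternative, which is likely the intended route, is to invoke the equivalence of \cite{heuts} between the $v_h$-periodic homotopy category and a category of spectral Lie algebras in $\mathrm{Sp}_{T(h)}$. Under this equivalence, the Goodwillie tower of the identity corresponds to the lower-central-series/weight filtration of the associated Lie algebra, and a suspension $\Sigma^2 X$ corresponds (up to shift) to a free object on a $T(h)$-local spectrum built from $\Phi_v X$. The weight filtration of a free Lie algebra splits canonically by internal grading, producing the natural direct sum decomposition claimed. The essential role of the double suspension $\Sigma^2 X$ is twofold: it guarantees simple connectivity and enough cohesion for the spectral Lie algebra formalism to apply without boundary subtleties, and it ensures that the induction on $n$ can be carried out with the $v_h$-structure propagating cleanly through the tower.
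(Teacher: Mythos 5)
Your instinct — that the splitting ultimately comes from the vanishing of $T(h)$-local Tate constructions — is the right one, and that is indeed the engine of the paper's proof. But both of your routes stop short of, or mis-state, the step that actually makes this engine applicable. Kuhn's theorem (\cite{kuhntate}, Thm.\ 1.1) gives a splitting of the Goodwillie tower of a functor $\mathrm{Sp}_{T(h)} \to \mathrm{Sp}_{T(h)}$, not of the unstable tower of the identity on spaces. A priori the $k$-invariants
\[
\kappa_n \colon \Phi_v P_{n-1}(\Sigma^2 X) \longrightarrow \Sigma\,\Phi_v D_n(\Sigma^2 X)
\]
are maps of $T(h)$-local spectra, but there is no immediate reason they should factor through a norm cofibre or Tate construction: that phenomenon is special to polynomial functors on spectra, and the underlying functor here is the identity on spaces. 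Your Route 1 recognises this difficulty (``the operadic bookkeeping here is delicate'') and stops at precisely the point where a genuine new idea is needed; Arone--Ching's description of the $k$-invariants via cooperad structure does not by itself produce the Tate factorisation.

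The missing idea, and the content of the paper's proof, is the reduction of the unstable tower to a stable one before invoking Kuhn. Using Bousfield's localisation $L_h^f$ on $d_h$-connected $p$-local spaces and his functor $\Theta \colon \mathrm{Sp}_{T(h)} \to L_h^f\mathcal{S}_*^{(p)}\langle d_h\rangle$, one establishes a natural chain of equivalences
\[
\Phi_v P_n(\Sigma^2 X) \;\simeq\; \Phi_v L_h^f P_n(\mathrm{id})(\Sigma^2 X) \;\simeq\; P_n(\Phi_v \Theta)\bigl(L_{T(h)}\Sigma^{\infty+2} X\bigr),
\]
the last step resting on Bousfield's result that $L_h^f\Sigma^2 X \simeq \Theta L_{T(h)}\Sigma^{\infty+2}X$ when $X$ admits a $v_h$ self-map, together with the fact that $L_h^f$ preserves both homotopy colimits and finite homotopy limits. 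After this reduction the tower is literally the Goodwillie tower of the endofunctor $\Phi_v\Theta$ of $\mathrm{Sp}_{T(h)}$ evaluated at a fixed $T(h)$-local spectrum, and Kuhn's Tate-vanishing splitting theorem applies verbatim. This is also where the hypothesis $\Sigma^2 X$ actually enters: it is needed so that $L_h^f\Sigma^2 X$ lies in the image of $\Theta$, not (as your Route~2 suggests) because $\Sigma^2 X$ becomes a \emph{free} spectral Lie algebra. Nothing here is free — the result applies to any finite type $h$ space with a $v_h$ self-map, and the splitting of the tower is a consequence of polynomiality of stable functors in the $T(h)$-local world, not of a weight decomposition of a free object. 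Route 2 therefore invokes a mechanism that does not exist in this generality, on top of appealing to the full spectral Lie algebra equivalence which the paper explicitly defers to \cite{heuts} and avoids using.
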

\begin{remark}
We have learnt recently that at height $h=1$, the divergence of the $v_1$-periodic Goodwillie tower on Moore spaces can also be deduced from Theorem 1.1. of \cite{ls}, which constructs so-called ``nonloopable $K/p_\ast$-equivalences''.
\end{remark}

\section*{Acknowledgements}
The authors would like to thank the Hausdorff Institute for Mathematics in Bonn where the final stages of this paper were completed. The first author was partially supported by an ERP scholarship of the German Academic Scholarship Foundation. We would also like to thank the anonymous referee for pointing out a mistake in an earlier version of Lemma \ref{lem:filtrations}.

\section{Goodwillie Towers on Wedge Sums}
\label{sec:calculus}

In this section, we will compute the Goodwillie tower of the identity on wedges of spaces. We begin with the following lemma: 
\begin{lemma}\label{towersonwedges}\label{lem:key}
Let $G\colon \mathcal{S}_* \rightarrow \mathcal{S}_*$ be a functor preserving weak equivalences, filtered colimits, and the basepoint (i.e. $G$ is reduced). Given a positive integer $k$, we write $\bigvee\colon \mathcal{S}_*^{\times k} \longrightarrow \mathcal{S}_*$ for the iterated wedge sum functor and set $F= G \circ \bigvee \in \Fun(\mathcal{S}_*^{\times k} ,\mathcal{S}_*)$. 
Let $U_n^k$ be  the ordered subset of $[n]^{\times k}$ consisting of all tuples $(a_1,\dots,a_k)$ satisfying $a_1 + \dots + a_k \leq n$. Then there are canonical equivalences of functors $\mathcal{S}_*^{\times k} \rightarrow \mathcal{S}_*$:
\begin{equation*}
\bigl(P_n G\bigr) \circ \bigvee  \longrightarrow  P_n F \longrightarrow \myholim{\vv n \in U_n^k}\  P_{\vv n} F.
\end{equation*}
The $P_n$ on the left and the middle refers to single-variable calculus, the $P_{\vv n}$ on the right-hand side to multivariable calculus.
\end{lemma}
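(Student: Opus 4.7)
The proof splits naturally into the two equivalences. The first is essentially formal, driven by the observation that $\bigvee$ preserves strongly cocartesian cubes; the second is the substantive comparison between single-variable and multi-variable Goodwillie calculus, and its proof is a layer-by-layer induction.

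For the equivalence $(P_n G)\circ\bigvee \simeq P_n F$, I would first verify that $(P_n G)\circ\bigvee$ is $n$-excisive as a single-variable functor on $\mathcal{S}_*^{\times k}$. A strongly cocartesian $(n+1)$-cube in $\mathcal{S}_*^{\times k}$ is a $k$-tuple of strongly cocartesian cubes in $\mathcal{S}_*$; the wedge functor preserves pushouts in each variable, so its image in $\mathcal{S}_*$ is strongly cocartesian, and $P_n G$ carries it to a cartesian cube. The universal property of $P_n F$ gives a canonical comparison map, and to promote this to an equivalence I would compare Taylor constructions directly. For any nonempty $U\subseteq\{0,\ldots,n\}$, the canonical natural map $\bigvee_i (X_i\star U)\to \bigl(\bigvee_i X_i\bigr)\star U$ is an equivalence (both model $\Sigma^{|U|-1}\bigvee_i X_i$), compatibly as $U$ varies. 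Taking homotopy limits over the punctured $(n+1)$-cube of such $U$ identifies $T_n F(X_1,\ldots,X_k)$ with $T_n G(\bigvee_i X_i)$; iterating $T_n$ and passing to the filtered colimit yields the equivalence.

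For the second equivalence, I would first construct the map. The key input is that a $\vec n$-multi-excisive functor is automatically $(\sum n_i)$-single-excisive, which follows from the characterization of $n$-excision via cross-effects: if all cross-effects of $F$ of multi-degree exceeding $\vec n$ vanish, then all single-variable cross-effects of total degree exceeding $\sum n_i$ vanish. Hence $P_{\vec n}F$, being $\vec n$-multi-excisive, is $n$-single-excisive whenever $\vec n\in U_n^k$; so $F\to P_{\vec n}F$ factors uniquely through $P_n F\to P_{\vec n}F$, and these maps assemble into the desired $P_n F\to \myholim{\vec n\in U_n^k} P_{\vec n}F$.

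To verify this is an equivalence, I would induct on $n$. The inductive step reduces to a comparison of layers: the $n$-homogeneous fibre $D_n F$ of the left-hand tower should match the product, over $\vec n$ with $\sum n_i=n$, of the multi-homogeneous pieces $D_{\vec n}F$ appearing as the new entries in $U_n^k\setminus U_{n-1}^k$. This ultimately follows from Goodwillie's classification of $n$-homogeneous functors by $\Sigma_n$-equivariant symmetric multilinear functors: on the product category $\mathcal{S}_*^{\times k}$, the $n$-th derivative decomposes canonically according to which coordinate of $\mathcal{S}_*^{\times k}$ each of the $n$ arguments lands in, and this decomposition, when equivariantly summed, matches the multi-homogeneous layers $D_{\vec n}F$. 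The main obstacle is precisely this layer comparison: while constructing the maps via universality is formal, confirming that the successive fibres of $\myholim{U_n^k} P_{\vec n}F$ split as the expected product of $D_{\vec n}F$'s requires careful bookkeeping in the punctured poset $U_n^k\setminus U_{n-1}^k$ and a reduction to the Young-subgroup restrictions $\Sigma_{n_1}\times\cdots\times\Sigma_{n_k}\subseteq\Sigma_n$ classifying the multi-homogeneous pieces.
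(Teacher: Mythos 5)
Your proposal is correct and follows essentially the same strategy as the paper's proof: the left equivalence comes from $\bigvee$ preserving homotopy (co)limits (the paper cites Lurie where you spell it out via the Taylor construction $T_n$), the right-hand map is built from the fact that $\vec n$-multi-excision implies $(\sum n_i)$-single-excision, and the second equivalence is established by induction on $n$ with a comparison of layers $D_n(G\circ\bigvee) \simeq \prod_{\sum n_i = n} D_{\vec n}(G\circ\bigvee)$. Where you appeal abstractly to the classification of $n$-homogeneous functors by $\Sigma_n$-equivariant multilinear functors and the decomposition of the argument into coordinates, the paper makes the same point explicitly via $D_n G(X) = \Omega^\infty(\partial_n G \wedge X^{\wedge n})_{h\Sigma_n}$ and the multinomial expansion of $(X_1\vee\cdots\vee X_k)^{\wedge n}$ into induced summands indexed by Young subgroups — the same content, just written out.
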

\begin{remark}
In this paper we focus on calculus for the category of pointed spaces. Goodwillie calculus can be developed in more general settings, e.g. for model categories satisfying appropriate conditions \cite{kuhngoodwillie, pereira} or certain kinds of $\infty$-categories (Chapter 6 of \cite{A}), and the appropriate analogue of Lemma \ref{lem:key} holds there as well.
\end{remark}
\begin{proof}[Proof of \ref{towersonwedges}]
The left arrow is an equivalence since the functor $\bigvee\colon \mathcal{S}_\ast^{\times k} \rightarrow \mathcal{S}_\ast$ preserves homotopy colimits. The existence of the right morphism follows from the observation that functors 
$\mathcal{S}^{\times k}_\ast \rightarrow \mathcal{S}_\ast$ which are 
 $\vv n$-excisive as multivariable functors are $\sum_i n_i$-excisive when considered as functors in one variable \mbox{(cf. Lemma 6.6 of \cite{1}).}
  
We now prove by induction that the composition of the two arrows is an equivalence, which will conclude the proof of the statement. Assume that the map is an equivalence for $n-1$. We have a diagram of fibre sequences
\[
\begin{tikzcd}
D_n G \circ \bigvee \ar{r}\ar{d} & \prod_{n_1+ \dots + n_k = n} D_{(n_1,\dots,n_k) }\left(G \circ \bigvee \right) \ar{d} \\
P_n G  \circ \bigvee \ar{r}\ar{d} & \myholim{\vv n \in U_k^n}  \ P_{\vv n} \left( G \circ \bigvee \right) \ar{d} \\
P_{n-1} G \circ \bigvee \ar{r} & \myholim{\vv{n-1} \in U_k^{n-1}}  \ P_{\vv{n-1}}\left( G \circ \bigvee \right). 
\end{tikzcd}
\]
A natural transformation between reduced $n$-excisive functors is an equivalence if and only if it induces an equivalence on each homogeneous layer $D_k$, for $k \leq n$. By our inductive hypothesis, it therefore suffices to prove that the top horizontal arrow in the diagram above is an equivalence. Observe that there are natural equivalences
\begin{eqnarray*}
D_n G(X_1 \vee \cdots \vee X_k) & \simeq & \Omega^\infty\bigl((\partial_n G \wedge (X_1 \vee \cdots \vee X_k)^{\wedge n})_{h\Sigma_n}\bigr) \\
& \simeq & \prod_{n_1 + \dots + n_k = n} \Omega^\infty\bigl((\partial_n G \wedge X^{n_1} \wedge \cdots \wedge X^{n_k})_{h(\Sigma_{n_1} \times \cdots \times \Sigma_{n_k})}\bigr).
\end{eqnarray*}
The latter expression is precisely the evaluation of the functor
\begin{equation*}
\prod_{n_1+ \dots + n_k = n} D_{(n_1,\dots,n_k) }\left(G \circ \bigvee \right) 
\end{equation*}
at $(X_1, \ldots, X_k)$ (compare Lemma 1.3 of \cite{3}) and it is straightforward to see that the horizontal map is compatible with these identifications. Hence the claim holds true for $n$.
\end{proof}

Every word $w\in \mathbf{L}_k$ gives rise to a functor
$ w: \mathcal{S}_\ast^{\times k} \rightarrow \mathcal{S}_\ast$
by smashing the factors in the order they appear in $w$.
The \textit{iterated Samelson product} yields a transformation
\[ w(\iota_{X_1}, \ldots, \iota_{X_k}): w(X_1,\dots , X_k) \rightarrow \Omega \Sigma( X_1\vee \dots\vee X_k). \]
Extending multiplicatively, we obtain a transformation 
\[\varphi_w:  \Omega \Sigma w(X_1,\dots , X_k) \rightarrow \Omega \Sigma( X_1\vee \dots \vee X_k). \]
Finally, we obtain a transformation
\[\sideset{}{'}\prod_{w\in \mathbf{L}_k} \Omega \Sigma  w(X_1,\dots , X_k)  \rightarrow \Omega \Sigma( X_1\vee \dots \vee X_k) \]
by multiplying all these maps in the order determined by $\mathbf{L}_k$ (compare Theorem 4.3.3 of \cite{4}).

\begin{theorem}[Hilton \cite{hilton}, Milnor\cite{milnor}]
The natural transformation 
\[ \sideset{}{'}\prod_{w\in \mathbf{L}_k} \Omega \Sigma  w(X_1,\dots , X_k) \rightarrow \Omega \Sigma( X_1\vee \dots\vee X_k) \]
is an equivalence.
\end{theorem}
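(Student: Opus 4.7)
The plan is to argue by induction on $k$. The case $k = 1$ is tautological since $\mathbf{L}_1 = \{x_1\}$, so the substance lies in the base case $k = 2$ and the inductive step.

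For the base case, I would use James' model: for any pointed connected space $Z$ there is an equivalence $\Omega\Sigma Z \simeq JZ$, where $JZ$ is the free topological monoid on $Z$, filtered by word length. Applied to $Z = X_1 \vee X_2$, the natural projection $p: J(X_1 \vee X_2) \to JX_2$ induced by collapsing $X_1$ is a quasi-fibration with a section coming from $X_2 \hookrightarrow X_1 \vee X_2$. A direct combinatorial analysis of words in $J(X_1 \vee X_2)$ identifies the fibre of $p$ with $J\bigl(X_1 \wedge (JX_2)_+\bigr)$; using James' splitting $\Sigma JX_2 \simeq \Sigma\bigvee_{n \geq 0} X_2^{\wedge n}$ this fibre is equivalent to $\Omega\Sigma \bigvee_{n \geq 0}(X_2^{\wedge n} \wedge X_1)$. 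Iterating the same analysis on the fibre (peeling off one wedge summand at a time) one recognises successive factors as $\Omega\Sigma w(X_1, X_2)$ for $w$ ranging over the Hall basis $\mathbf{L}_2$, with the assembly maps being the iterated Samelson products $\varphi_w$. The weak-product structure on the source is automatic: $J$ is built as a sequential colimit along inclusions of finite word length, so on any compact subspace only finitely many factors act nontrivially.

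For the inductive step, apply the $k = 2$ case to the pair $(X_1, X_2 \vee \cdots \vee X_k)$. Each resulting factor is $\Omega\Sigma$ of a smash product involving copies of $X_1$ and $X_2 \vee \cdots \vee X_k$; distributing the smash over the wedge $X_2 \vee \cdots \vee X_k$ produces a wedge of smash products of the individual $X_i$'s, and the inductive hypothesis applied to this wedge expresses the corresponding $\Omega\Sigma$ as a weak product of $\Omega\Sigma$'s of Lie words. A standard Hall-basis identity, namely that every word in $\mathbf{L}_k$ decomposes uniquely relative to the occurrences of $x_1$, ensures the resulting indexing set matches $\mathbf{L}_k$ with the prescribed ordering, and naturality of the Samelson construction identifies the resulting map with the one described above.

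The main obstacle is the base case: the identification of the fibre of the James projection with $J(X_1 \wedge (JX_2)_+)$ and the iteration required to extract all Lie-word factors. This is the combinatorial heart of Milnor's original proof (compare Section XI.6 of \cite{4}), and writing it out cleanly requires careful tracking of word decompositions in the James construction. The combinatorial bookkeeping for the induction on $k$, while routine, also requires some care to match the orderings of the weak products on both sides.
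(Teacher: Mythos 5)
The paper does not prove this theorem --- it is imported as a classical result and cited to Neisendorfer (\cite{4}, Theorem 4.3.3). Your outline follows the standard James--Milnor route that underlies that reference, so at the level of strategy you are reproducing the cited proof rather than diverging from anything in the paper. That said, two details in your sketch deserve a warning. First, the projection $p\colon J(X_1\vee X_2)\to JX_2$ induced by collapsing $X_1$ is generally \emph{not} a quasi-fibration: its set-theoretic fibre over the empty word is $JX_1$, while over an $m$-letter word in $JX_2$ it is $(JX_1)^{m+1}$ (insert $X_1$-subwords into the $m+1$ gaps), so the fibres are not even abstractly equivalent. Consequently, "a direct combinatorial analysis of words" does not identify the fibre with $J(X_1\wedge(JX_2)_+)$; that is the \emph{homotopy} fibre, and obtaining it requires honest input beyond word bookkeeping --- either Ganea's identification of the fibre of a cofibre map, or the relative James construction, or else passing to the fibration replacement $\Omega\Sigma(X_1\vee X_2)\to\Omega\Sigma X_2$ and analysing it there. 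Second, the recognition that the iterated Samelson products $\varphi_w$ assemble the resulting weak product into the Hilton--Milnor map (rather than merely some abstract equivalence with the same domain and codomain) also rests on the multiplicativity of the James splitting and some care with the ordering on the Hall basis; you flag this but it is where the real labour sits. With those two points supplied, the induction on $k$ via the decomposition of a Hall basis along occurrences of $x_1$ is indeed the standard conclusion.
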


Lemma \ref{lem:key} allows us to understand $P_n(\Omega\Sigma)(X_1 \vee \dots \vee X_k)$ in terms of multivariable calculus. The following lemma (which is also contained, with a different proof using a connectivity argument, in the proof of Lemma 1.4 of \cite{3}) will similarly help us understand the polynomial approximations to the domain of the Hilton-Milnor map:

\begin{lemma}[Arone--Kankaanrinta]
\label{lem:smashpowers}
Let $G\colon \mathcal{S}_* \rightarrow \mathcal{S}_*$ be a reduced functor preserving weak equivalences and filtered colimits. Consider a $k$-tuple of natural numbers $(a_1, \ldots, a_k)$ and define a functor $F\colon \mathcal{S}_*^{\times k} \rightarrow \mathcal{S}_*$ by
\begin{equation*}
F(X_1, \ldots, X_k) := G(X_1^{\wedge a_1} \wedge \cdots \wedge X_k^{\wedge a_k}).
\end{equation*}
Then for any $k$-tuple $\vv n = (n_1, \ldots, n_k)$ there is a natural equivalence
\begin{equation*}
P_{\vv n} F(X_1,\ldots, X_k) \xrightarrow{\sim} P_l G(X_1^{\wedge a_1} \wedge \cdots \wedge X_k^{\wedge a_k}),
\end{equation*}
where $l = \mathrm{min}_i \lfloor \frac{n_i}{a_i}\rfloor$.
\end{lemma}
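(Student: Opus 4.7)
Write $\mathrm{Sm}\colon \mathcal{S}_*^{\times k} \to \mathcal{S}_*$ for the smash-power functor $(X_1,\ldots,X_k) \mapsto X_1^{\wedge a_1} \wedge \cdots \wedge X_k^{\wedge a_k}$, so $F = G \circ \mathrm{Sm}$. My plan is to exhibit $P_l G \circ \mathrm{Sm}$ as the $\vec n$-excisive approximation of $F$ directly. First I construct the natural comparison map via the universal property of $P_{\vec n}F$, which requires checking that $P_l G \circ \mathrm{Sm}$ is $\vec n$-excisive. I would do this layer by layer in the Taylor tower of $G$: the $j$th layer $D_j G(Y) \simeq \Omega^\infty(\partial_j G \wedge Y^{\wedge j})_{h\Sigma_j}$, composed with $\mathrm{Sm}$, becomes $\Omega^\infty(\partial_j G \wedge X_1^{\wedge j a_1} \wedge \cdots \wedge X_k^{\wedge j a_k})_{h\Sigma_j}$, which is $j a_i$-homogeneous in $X_i$. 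For $j \leq l$ we have $j a_i \leq l a_i \leq n_i$ by the definition of $l$, so each layer is $\vec n$-excisive; since $\vec n$-excisiveness is closed under fibre sequences, the same holds for $P_l G \circ \mathrm{Sm}$.

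To show the resulting map $P_{\vec n}F \to P_l G \circ \mathrm{Sm}$ is an equivalence, I would reduce to a single-variable statement. Using the standard identification of $P_{\vec n}$ with the iterated composition of single-variable $n_i$-excisive approximations $P_{n_i}^{(i)}$ applied in each coordinate, together with the fact that smashing with a fixed space is $1$-excisive and therefore commutes with single-variable Goodwillie approximations, the problem reduces to checking
\[
P_n\bigl(G\circ (-)^{\wedge a}\bigr)(X) \xrightarrow{\sim} P_{\lfloor n/a\rfloor} G(X^{\wedge a})
\]
for any $G$ preserving weak equivalences and filtered colimits and any $a \geq 1$. I would prove this by induction on $n$ using the fibre sequences $D_n \to P_n \to P_{n-1}$ on both sides. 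When $a \nmid n$, both quotients vanish (on the right because $\lfloor n/a\rfloor = \lfloor (n-1)/a\rfloor$, on the left because $\partial_n(G \circ (-)^{\wedge a})$ is contractible), so the equivalence propagates from $n-1$ to $n$. When $n = la$, one must identify $D_n(G\circ (-)^{\wedge a})(X) \simeq D_l G(X^{\wedge a})$.

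The principal obstacle is precisely this last layer identification: that $\partial_n(G \circ (-)^{\wedge a})$ vanishes when $a \nmid n$, and that for $n=la$ it recovers $\partial_l G$ with the $\Sigma_n$-action pulled back from $\Sigma_l$ through the wreath-product quotient $\Sigma_n \to \Sigma_a \wr \Sigma_l \to \Sigma_l$. I would extract this from a direct cross-effect computation: the decomposition $(\bigvee_{i=1}^n X_i)^{\wedge a} \simeq \bigvee_{f\colon [a]\to [n]} X_{f(1)} \wedge \cdots \wedge X_{f(a)}$ shows that only summands indexed by surjective $f$ survive the total-fibre construction defining $\mathrm{cr}_n(G \circ (-)^{\wedge a})$, and organising these by the fibre sizes of $f$ produces the expected wreath-product description of the layer. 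Once the layer identification is in hand the inductive step closes and the lemma follows.
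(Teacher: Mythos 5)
Your argument is correct but is organised differently from the paper's, so let me compare the two. The paper's proof works with multivariable calculus throughout: it first replaces $G$ by a polynomial approximation $P_N G$ with $N=\sum n_i$ (which does not change $P_{\vec n}F$), observes that $F=G\circ\mathrm{Sm}$ is then a finite extension of the composites $D_jG\circ\mathrm{Sm}$, and notes that each such composite is multihomogeneous of multidegree $j\vec a$. Since cross-effects preserve fibre sequences, the multiderivatives of $F$ are supported at the tuples $\{j\vec a\}$, and the answer $P_lG\circ\mathrm{Sm}$ is read off directly from truncating the tower of $G$ at $l$. Your approach first reduces to the one-variable statement $P_n(G\circ(-)^{\wedge a})\simeq P_{\lfloor n/a\rfloor}G\circ(-)^{\wedge a}$ (via the iterated-$P_{n_i}$ description of $P_{\vec n}$ and the fact that smashing with a fixed space commutes with $P_m$, a small step whose verification you elide), and then proves that statement by induction on $n$ using a chain-rule computation of $\partial_n(G\circ(-)^{\wedge a})$: vanishing when $a\nmid n$ and the wreath-product identification when $n=la$. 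Both proofs hinge on exactly the same layer-level observation, but your induction plus single-variable reduction is somewhat longer and makes the equivariance bookkeeping for $\partial_a((-)^{\wedge a})\simeq\Sigma^\infty\Sigma_a^+$ explicit, whereas the paper's polynomial-truncation move lets it dispense with the derivative computation entirely and get the answer in one shot. You should phrase the "cross-effect computation" step a bit more carefully, since the raw $n$th cross-effect of $G\circ(-)^{\wedge a}$ is a total fibre of $G$ evaluated on smash powers of wedges, not just of $(-)^{\wedge a}$; what actually carries the argument is Ching's chain rule for Goodwillie derivatives, which gives the concentration at $n=la$ and the wreath-product structure directly.
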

\begin{proof}
To determine $P_{\vv n} F$ we may assume without loss of generally that $G$ is $N$-excisive for some $N$. In fact $N = n_1 + \ldots + n_k$ suffices. 
With this assumption, $G$ fits into a finite tower of fibrations
\[
\begin{tikzcd}
D_N G \ar{d} & D_{N-1} G \ar{d} & \cdots & D_2 G \ar{d} & \\
G \ar{r} & P_{N-1} G \ar{r} & \cdots \ar{r} & P_2 G \ar{r} & P_1 G.
\end{tikzcd}
\]
Therefore $F(X_1, \ldots, X_k)$ arises from a finite tower of fibrations in which the successive fibres are
\begin{equation*}
\Omega^\infty\bigl((\partial_j G \wedge X^{\wedge ja_1} \wedge \cdots \wedge X^{\wedge ja_k})_{h\Sigma_j} \bigr)
\end{equation*}
for $0 \leq j \leq N$. Since the process of forming multiderivatives preserves fibre sequences of functors, it is clear that the only values of $\vv n$ for which the multiderivative $D_{\vv n} F$ can be nonzero are the multiples of $\vv a$, i.e., the tuples $(ja_1, \ldots, ja_k)$. For a general tuple $\vv n$, it follows that
\begin{equation*}
P_{\vv n} F(X_1,\ldots, X_k) \xrightarrow{\sim} P_l G(X_1^{\wedge a_1} \wedge \cdots \wedge X_k^{\wedge a_k})
\end{equation*}
where $l$ is the largest integer such that $la_i \leq n_i$ for every $1 \leq i \leq k$.
\end{proof}

We can combine these results to compute the Goodwillie tower on a wedge sum:
\begin{theorem}
\label{thm:HMGoodwillie}
Fix pointed connected spaces $X_1, \ldots, X_k$.
The Hilton--Milnor map refines to an equivalence of towers:
\[
\sideset{}{'}\prod_{w\in \mathbf{L}_k}\Omega P_{\lfloor \frac{n}{|w|}\rfloor} \Bigl(\Sigma w(X_1, \ldots, X_k) \Bigr) \xrightarrow{\sim} \Omega P_n (\Sigma X_1 \vee \cdots \vee \Sigma X_k).
\]
\end{theorem}

\begin{proof}
We use without further notice that we have $P_n(\Omega G \Sigma) \simeq \Omega \circ P_nG \circ \Sigma$ for any functor \mbox{$G\colon \mathcal{S}_* \rightarrow \mathcal{S}_*$}. Also, to avoid notational confusion, we will write $P_n \mathrm{id}(X)$ rather than the abbreviated $P_n X$ for the length of this proof. Forming $\vv{n}$-excisive approximations to functors of $k$ variables commutes with finite limits and filtered colimits, so that the Hilton--Milnor theorem implies an equivalence
\begin{equation*}
\sideset{}{'}\prod_{w\in \mathbf{L}_k} \Omega P_{\vv n}(\Sigma  w)(X_1,\ldots , X_k) \rightarrow \Omega P_{\vv n}(\Sigma(-\vee \dots\vee -))(X_1, \ldots, X_k).
\end{equation*}
We wish to form the homotopy limit over $\vv n \in U_k^n$ on both sides, where as before $U_n^k$ is the ordered subset of $[n]^{\times k}$ consisting of all tuples $(a_1,\dots,a_k)$ satisfying $a_1 + \dots + a_k \leq n $.

On the right-hand side, this will by Lemma \ref{lem:key} yield
\begin{equation*}
\Omega P_n \mathrm{id}(\Sigma X_1 \vee \dots \vee \Sigma X_k).
\end{equation*}
On the left-hand side, Lemma \ref{lem:smashpowers} implies that the homotopy limit 
\begin{equation*}
\myholim{\vv n \in U_k^n} P_{\vv n}(\Sigma w)(X_1, \ldots, X_k) 
\end{equation*}
may be identified with 
\begin{equation*}
P_l\mathrm{id}(\Sigma w(X_1, \ldots, X_k)),
\end{equation*}
for $l$ the largest integer satisfying $l(w_1 + \cdots + w_k) \leq n$. In other words $l = \lfloor \frac{n}{|w|} \rfloor$, which completes the proof.
\end{proof}

\section{Divergence on Wedges}\label{sec:3}
\label{sec:counterexamples}

Our description of Goodwillie towers on wedges in Theorem \ref{thm:HMGoodwillie} has the following straightforward consequence:

\begin{theorem}
\label{thm:HMproducts}
Consider pointed connected spaces $X_1, \ldots, X_k$. Then there is a natural equivalence
\begin{equation*}
\prod_{w \in \mathbf{L}_k} \myholim{n} \Phi_v P_n(\Sigma w(X_1, \ldots, X_k)) \rightarrow \myholim{n} \Phi_v P_n(\Sigma X_1 \vee \cdots \vee \Sigma X_k).
\end{equation*}
The product on the left is to be interpreted as the homotopy product.
\end{theorem}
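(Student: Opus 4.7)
The plan is to apply Theorem \ref{thm:HMGoodwillie} and then push $\Phi_v$ and $\myholim{n}$ through the (weak) product, interchanging these operations freely.

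The first observation I would make is that for each fixed $n$ the weak product on the left-hand side of Theorem \ref{thm:HMGoodwillie} collapses to an ordinary finite product: whenever $|w| > n$ one has $\lfloor n/|w|\rfloor = 0$, and since the identity functor on $\mathcal{S}_*$ is reduced, $P_0\mathrm{id}(Y) \simeq *$ for every pointed space $Y$. Only the finitely many Lie words $w \in \mathbf{L}_k$ with $|w| \leq n$ contribute non-trivially, so I may rewrite that side as $\prod_{|w|\leq n}\Omega P_{\lfloor n/|w|\rfloor}\bigl(\Sigma w(X_1,\ldots,X_k)\bigr)$.

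Next I would apply $\Phi_v$ levelwise. Since $\Phi_v$ preserves finite homotopy limits, it commutes with $\Omega$ and with finite products, producing a natural equivalence
$$\prod_{|w|\leq n}\Omega\Phi_v P_{\lfloor n/|w|\rfloor}\bigl(\Sigma w(X_1,\ldots,X_k)\bigr) \xrightarrow{\sim} \Omega\Phi_v P_n(\Sigma X_1 \vee\cdots\vee\Sigma X_k).$$
Now take $\myholim{n}$ of both sides. The right-hand side becomes $\Omega\myholim{n}\Phi_v P_n(\Sigma X_1 \vee\cdots\vee\Sigma X_k)$. On the left, I would first enlarge each finite product to one indexed by all of $\mathbf{L}_k$ (the adjoined factors being contractible), then commute the homotopy product past $\myholim{n}$ since both are homotopy limits. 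For each fixed $w$ the subsystem $n \mapsto \lfloor n/|w|\rfloor$ (for $n \geq |w|$) is cofinal in $\NN$, so the $w$-factor rewrites as $\Omega\myholim{m}\Phi_v P_m\bigl(\Sigma w(X_1,\ldots,X_k)\bigr)$. Pulling the $\Omega$ outside the product and cancelling it on both sides — using that $\Phi_v$ takes values in $T(h)$-local spectra, where $\Omega$ is invertible — delivers the desired equivalence.

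The point demanding the most care is the interchange of the $\mathbf{L}_k$-indexed product with $\myholim{n}$ in the penultimate step: the indexing set of the finite product varies with $n$. This is harmless once the weak products have been replaced by strict products indexed by all of $\mathbf{L}_k$ (with contractible factors adjoined), because then the tower structure maps restrict to identities on the extra contractible factors, and both operations are honest homotopy limits and so commute. Beyond this, the argument is a formal consequence of the hypotheses on $\Phi_v$ combined with Theorem \ref{thm:HMGoodwillie}.
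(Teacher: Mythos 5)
Your proposal is correct and follows essentially the same route as the paper's proof: apply $\Phi_v$ to the equivalence of Theorem \ref{thm:HMGoodwillie}, observe that at each fixed $n$ the weak product has only finitely many nontrivial factors (so $\Phi_v$ passes through it as a finite homotopy limit), and then commute $\myholim{n}$ past the $\mathbf{L}_k$-indexed product. The paper elides the bookkeeping you spell out — padding the finite products with contractible factors to get a product over all of $\mathbf{L}_k$, the cofinality of $n \mapsto \lfloor n/|w|\rfloor$, and cancelling the $\Omega$'s at the end — but these are exactly the implicit steps in the paper's final sentence.
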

\begin{proof}
The functor $\Phi_v$ commutes with finite homotopy limits and filtered colimits. Therefore Theorem \ref{thm:HMGoodwillie} gives an equivalence
\begin{equation*}
\sideset{}{'}\prod_{w\in \mathbf{L}_k} \Phi_v P_{\lfloor \frac{n}{|w|}\rfloor}(\Sigma  w(X_1,\ldots , X_k)) \rightarrow \Phi_v P_n(\Sigma X_1 \vee \cdots \vee \Sigma X_k).
\end{equation*}
Since only finitely many words $w \in \mathbf{L}_k$ have length not exceeding $n$, the product on the left has only finitely many nonzero factors and there is no need to distinguish between the weak product and the actual product. 
Therefore we find an equivalence
\begin{equation*}
\myholim{n}\prod_{w \in \mathbf{L}_k} \Phi_v P_{\lfloor \frac{n}{|w|}\rfloor}(\Sigma w(X_1, \ldots, X_k)) \rightarrow \myholim{n} \Phi P_n(\Sigma X_1 \vee \cdots \vee \Sigma X_k).
\end{equation*}
The homotopy limit can now be commuted past the product to obtain the result.
\end{proof}

\begin{corollary}\label{products}
Consider pointed connected spaces $X_1, \ldots, X_k$. Then the map
\begin{equation*}
\Phi_v(\Sigma X_1 \vee \cdots \vee \Sigma X_k) \rightarrow \myholim{n} \Phi_v P_n(\Sigma X_1 \vee \cdots \vee \Sigma X_k)
\end{equation*}
may be identified (up to equivalence) with the evident map
\begin{equation*}
\sideset{}{'}\prod_{w\in \mathbf{L}_k} \Phi_v(\Sigma  w(X_1,\ldots , X_k)) \rightarrow \prod_{w \in \mathbf{L}_k} \myholim{n} \Phi_v P_n(\Sigma w(X_1, \ldots, X_k)).
\end{equation*}
\end{corollary}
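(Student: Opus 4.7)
The plan is to identify both the source and target of the convergence map via the Hilton-Milnor decomposition, and to verify that the two identifications intertwine the two convergence maps. The target is handled directly by Theorem \ref{thm:HMproducts}, which provides the natural equivalence
\[
\prod_{w \in \mathbf{L}_k} \myholim{n} \Phi_v P_n(\Sigma w(X_1, \ldots, X_k)) \xrightarrow{\sim} \myholim{n} \Phi_v P_n(\Sigma X_1 \vee \cdots \vee \Sigma X_k),
\]
so no further work is required on that side.

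For the source, I would apply $\Phi_v$ to the ordinary Hilton-Milnor equivalence
\[
\sideset{}{'}\prod_{w\in \mathbf{L}_k}\Omega\Sigma w(X_1,\ldots,X_k) \xrightarrow{\sim} \Omega\Sigma(X_1\vee\cdots\vee X_k).
\]
Since $\Phi_v$ preserves filtered colimits and finite homotopy limits, it commutes with the weak infinite product, which is by definition the filtered colimit of the finite products; since it takes values in $\mathrm{Sp}_{T(h)}$, the functor $\Omega$ is invertible, and we may de-loop both sides to obtain a natural equivalence
\[
\sideset{}{'}\prod_{w\in \mathbf{L}_k}\Phi_v(\Sigma w(X_1,\ldots,X_k)) \xrightarrow{\sim} \Phi_v(\Sigma X_1\vee\cdots\vee\Sigma X_k).
\]

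For the compatibility, the unit $Z \to P_n Z$ is natural in $Z$, and Theorem \ref{thm:HMGoodwillie} is by construction a refinement of the ordinary Hilton-Milnor equivalence through these units. Applying $\Phi_v$ and passing to $\myholim{n}$ therefore yields a commutative square whose vertical arrows are the two identifications above and whose horizontal arrows are, respectively, the convergence map of the wedge and the product of the individual convergence maps $\Phi_v(\Sigma w) \to \myholim{n}\Phi_v P_n(\Sigma w)$; this is exactly the identification asserted. The only subtle point is the interchange between the weak product and the homotopy limit over $n$, which was precisely what was addressed in Theorem \ref{thm:HMproducts}: for each fixed $n$ only finitely many words $w$ of length at most $n$ contribute, so the weak product becomes an honest product after forming $\myholim{n}$. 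Granted this observation, the identification of maps is a formal consequence of the functoriality of $\Phi_v$ and $P_n$.
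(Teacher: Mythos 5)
Your proposal is correct and matches the intended argument: the paper states this corollary without proof, implicitly leaving exactly the deduction you describe — identify the target via Theorem~\ref{thm:HMproducts}, identify the source by applying $\Phi_v$ to the Hilton--Milnor equivalence (using that $\Phi_v$ preserves filtered colimits and finite homotopy limits, and that $\Omega$ is invertible on spectra so one may de-loop), and observe that the identifications intertwine the two maps because Theorem~\ref{thm:HMGoodwillie} refines the Hilton--Milnor map through the units to the Goodwillie tower. The remark about the weak product becoming an honest product after $\myholim{n}$ is precisely the point already handled in the proof of Theorem~\ref{thm:HMproducts}.
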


The previous corollary makes it easy to construct examples of spaces for which the $v_h$-periodic Goodwillie tower does not converge:

\begin{corollary} 
\label{cor:counterexample}
Consider pointed connected spaces $X_1, \ldots, X_k$ and assume that $\Phi_v(\Sigma w(X_1, \ldots, X_k))$ is not contractible for infinitely many $w \in \mathbf{L}_k$.  Then the canonical map
\[  \Phi_v(\Sigma X_1 \vee \cdots \vee \Sigma X_k) \rightarrow \myholim{n} \Phi_v P_n(\Sigma X_1 \vee \cdots \vee \Sigma X_k) \]
is \emph{not} an equivalence.
\end{corollary}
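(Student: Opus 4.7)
The plan is to reduce the claim, via Corollary \ref{products}, to a statement about the discrepancy between a weak product and a genuine product of spectra, and then to detect this discrepancy on homotopy groups using the $d$-periodicity of $\Phi_v$.

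First I would invoke Corollary \ref{products} to identify the canonical map with
\[
\alpha: \sideset{}{'}\prod_{w \in \mathbf{L}_k} A_w \longrightarrow \prod_{w \in \mathbf{L}_k} B_w,
\]
where $A_w := \Phi_v(\Sigma w(X_1, \ldots, X_k))$, $B_w := \myholim{n} \Phi_v P_n(\Sigma w(X_1, \ldots, X_k))$, and the map is assembled from the componentwise comparison maps $\phi_w : A_w \to B_w$. Because $\Phi_v$ takes values in spectra, the weak product on the left is a filtered colimit of finite wedges and hence agrees with $\bigvee_w A_w$. Passing to homotopy groups then turns the source into $\bigoplus_w \pi_\ast A_w$ and the target into $\prod_w \pi_\ast B_w$; under these identifications, $\pi_\ast \alpha$ sends a finitely-supported tuple $(x_w)_w$ to $(\phi_w(x_w))_w$.

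Next I would derive a contradiction by assuming $\alpha$ is an equivalence. Surjectivity of $\pi_\ast \alpha$ in each degree $j$ forces every element of $\prod_w \pi_j B_w$ to have finite support, so $\pi_j B_w = 0$ for cofinitely many $w$. Because $\Phi_v$ is $d$-periodic and this periodicity is inherited by the homotopy limit $B_w$, only the finitely many residue classes $j \in \{0, \ldots, d-1\}$ are relevant; intersecting the corresponding cofinite sets yields a cofinite set of $w$ for which $B_w \simeq 0$. On the other hand, injectivity of $\pi_\ast \alpha$ applied to tuples supported at a single index forces each $\phi_w$ to be injective on $\pi_\ast$, so $B_w \simeq 0$ implies $A_w \simeq 0$. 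Combining these conclusions, $A_w \simeq 0$ for cofinitely many $w$, contradicting the hypothesis that infinitely many $A_w$ are non-contractible.

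The only point requiring any real care is the periodicity step, namely upgrading the assertion `each graded piece $\pi_j B_w$ vanishes off a cofinite set' to `the spectrum $B_w$ itself vanishes off a cofinite set'. This upgrade is immediate from the $d$-periodicity of $\Phi_v$, which lets one intersect only finitely many cofinite sets of indices, so there is no essential obstacle to the argument.
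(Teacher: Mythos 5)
Your proof is correct and follows essentially the same strategy as the paper: reduce via Corollary \ref{products} to the comparison of a weak product with a full product in spectra, pass to homotopy groups, and use the $d$-periodicity of $\Phi_v$ together with a pigeonhole-type argument to rule out equivalence. The only organizational difference is that you run a unified proof by contradiction — extracting injectivity of the component maps $\phi_w$ from injectivity of $\pi_\ast\alpha$ and applying periodicity to the targets $B_w$ rather than the sources $A_w$ — whereas the paper splits into the case where some $\phi_w$ fails to be an equivalence (handled immediately by the ``non-bijection followed by injection'' observation) and the case where all $\phi_w$ are equivalences (handled by pigeonhole on the $A_w$); both versions are sound.
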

\begin{proof}
By Corollary \ref{products}, the map in question appears as the diagonal in the following diagram:
\[
\begin{tikzcd}
\displaystyle \sideset{}{'}\prod_{w\in \mathbf{L}_k} \Phi_v(\Sigma  w(X_1,\ldots , X_k)) \ar{r}\ar{d} &\displaystyle \sideset{}{}\prod_{w \in \mathbf{L}_k} \Phi_v(\Sigma w(X_1, \ldots, X_k)) \ar{d} \\
\displaystyle \sideset{}{'} \prod_{w \in \mathbf{L}_k} \displaystyle\myholim{n} \Phi_v P_n(\Sigma w(X_1, \ldots, X_k)) \ar{r} &\displaystyle \sideset{}{}\prod_{w \in \mathbf{L}_k} \displaystyle \myholim{n} \Phi_v P_n(\Sigma w(X_1, \ldots, X_k)).
\end{tikzcd}
\]
The homotopy groups of an infinite (possibly restricted) product can simply be computed as the infinite product of homotopy groups (there is no $\mathrm{lim}^1$ term). For a given $\ell$, we obtain a square
\[
\begin{tikzcd}
\sideset{}{'}\prod_{w\in \mathbf{L}_k} \pi_\ell \Phi_v(\Sigma  w(X_1,\ldots , X_k)) \ar[>->]{r} \ar{d} &\displaystyle \sideset{}{}\prod_{w \in \mathbf{L}_k}\pi_\ell  \Phi_v(\Sigma w(X_1, \ldots, X_k)) \ar{d}  \\
\displaystyle \sideset{}{'} \prod_{w \in \mathbf{L}_k} \displaystyle\pi_\ell\,\myholim{n} \Phi_v P_n(\Sigma w(X_1, \ldots, X_k)) \ar[>->]{r} &\displaystyle \sideset{}{}\prod_{w \in \mathbf{L}_k} \displaystyle\pi_\ell \,\myholim{n} \Phi_v P_n(\Sigma w(X_1, \ldots, X_k)).
\end{tikzcd}
\]
If there exists a word $w$ such that the map 
\[
\pi_\ell \,\Phi_v(\Sigma  w(X_1,\ldots , X_k)) \ \ \longrightarrow\ \ \pi_\ell\,  \myholim{n} \Phi_v P_n(\Sigma w(X_1, \ldots, X_k)) \ \]
is \textit{not} a bijection, then the left vertical map of the last square also fails to be bijective. Since postcomposing a non-bijective map with an injection yields a non-bijective map, the diagonal fails to be bijective. We may therefore assume without restriction that both vertical legs are isomorphisms and it suffices to check that the top horizontal map is not surjective.

Combining the periodicity of the homotopy groups of $\Phi_v$ with the pigeonhole principle, we conclude that there is an integer $\ell$ with $\pi_\ell\Phi_v(\Sigma w(X_1, \ldots, X_k)) \neq 0$ for infinitely many $w \in \mathbf{L}_k$. This clearly implies that the top horizontal map is not a bijection and thereby establishes the claim.
\end{proof}

We can now deduce:

\begin{theorem}
\label{thm:wedgesofspheres}
The $v_h$-periodic Goodwillie tower is infinite and fails to converge on wedges of spheres (of dimension at least $2$).
\end{theorem}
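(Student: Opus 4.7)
The plan is to apply Corollary \ref{cor:counterexample}, together with the Arone--Mahowald theorem for spheres. Writing a wedge of spheres of dimension $\geq 2$ as $\Sigma X_1 \vee \cdots \vee \Sigma X_k$ with $X_i = S^{n_i-1}$ connected, we may assume $k \geq 2$ (otherwise the ``wedge'' is a single sphere and the tower converges by Theorem \ref{thm:AM}). For any Lie word $w \in \mathbf{L}_k$ with multi-degree $(a_1(w), \ldots, a_k(w))$, the smash product $w(X_1, \ldots, X_k)$ is itself a sphere, so $\Sigma w(X_1, \ldots, X_k) \simeq S^{m(w)}$ where
$$m(w) = 1 + \sum_{i=1}^{k} a_i(w)(n_i - 1).$$
Corollary \ref{cor:counterexample} thus reduces the failure of convergence to producing infinitely many $w \in \mathbf{L}_k$ for which $\Phi_v S^{m(w)}$ is not contractible.

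Finding such $w$ combines two ingredients. First, the Arone--Mahowald identification of the finite tower for $\Phi_v S^m$ in terms of Steinberg summands shows that $\Phi_v S^m$ is nontrivial for infinitely many $m$; for example, at $h = 1$ and odd prime this holds for every odd $m \geq 3$, with analogous statements at higher heights. Second, since $k \geq 2$, every pair $(a, b)$ with $a, b \geq 1$ is realised as the first two coordinates of a multi-degree by an explicit iterated Samelson bracket, so the set of achievable dimensions contains
$$\{\, 1 + a(n_1 - 1) + b(n_2 - 1) : a, b \geq 1 \,\}.$$
As $n_1 - 1, n_2 - 1 \geq 1$, this set contains arbitrarily large integers in both parity classes, and in particular meets the residue class(es) on which $\Phi_v S^{m} \neq 0$ in infinitely many points. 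This yields the required infinite family of $w$, and hence divergence.

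The ``infinite tower'' assertion follows from the same input via Theorem \ref{thm:HMGoodwillie}: the tower $\{\Phi_v P_n\}$ evaluated on the wedge splits as a product $\prod_w \Phi_v P_{\lfloor n/|w|\rfloor}(S^{m(w)})$ with only finitely many nontrivial factors at each fixed $n$, and each $w$ with $\Phi_v S^{m(w)} \neq 0$ contributes a fresh nonconstant factor once $n$ grows past $|w|$; hence the tower cannot stabilise at any finite stage. The main obstacle in executing this plan is the routine bookkeeping required in matching the residue class on which $\Phi_v S^m$ is nonzero to the values of $m(w)$ achievable by varying the multi-degree, but this is easily handled since $k \geq 2$ gives sufficient flexibility to realise either parity of $m(w)$.
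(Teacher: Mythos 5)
Your proposal is correct and follows essentially the same route as the paper: apply Corollary~\ref{cor:counterexample}, using that each $\Sigma w(X_1,\ldots,X_k)$ is a sphere of dimension at least $2$ with nonvanishing $v_h$-periodic homotopy, and deduce infiniteness of the tower from Theorem~\ref{thm:HMGoodwillie} together with the Arone--Mahowald stabilisation bound. The only deviation is a matter of caution rather than substance: you hedge on which spheres have $\Phi_v \neq 0$ and reconstruct infinitely many good words by a residue-class argument in the multidegrees, whereas the paper simply uses that $\Phi_v S^m$ is nontrivial for \emph{every} $m \geq 2$ (odd spheres by Arone--Mahowald, even spheres via the James/EHP splitting), which makes the residue-matching unnecessary.
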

\begin{proof}
The divergence follows by applying Corollary \ref{cor:counterexample} in the case where each $X_i$ is a sphere of dimension at least 1. Indeed, the assumption of the corollary holds true since each $\Sigma w(X_1, \ldots, X_k)$ is another sphere of dimension at least 2. Such a sphere has nonvanishing $v_h$-periodic homotopy groups. 

To see that the $v_h$-periodic Goodwillie tower of $\Sigma X_1 \vee \cdots \vee \Sigma X_k$ does not become constant at any finite stage, we first recall that the $v_h$-periodic Goodwillie tower of a sphere only becomes constant at stage $p^h$ or $2p^h$, depending on the parity of the dimension of the sphere. It then follows from Theorem \ref{thm:HMGoodwillie} that in the tower for $\Sigma X_1 \vee \cdots \vee \Sigma X_k$, the contribution from the factor corresponding to a word $w$ only becomes constant at stage $|w|p^h$ or $2|w|p^h$. Clearly there is no bound on these numbers as $w$ runs through $\mathbf{L}_k$.
\end{proof}

\section{Goodwillie derivatives on cofibres}

In this section we analyse the layers in the Goodwillie tower of the identity functor when evaluated on a cofibre $\cof(f)$ in terms of the map $f\colon X\rightarrow Y$. In fact, our analysis concerns the free $\mathcal{O}$-algebra on $\cof(f)$ for $\mathcal{O}$ any operad in spectra. 
The Goodwillie layers of a space arise when we take $\mathcal{O}$ to be the ``shifted Lie operad" (cf. \cite{ching}) -- we will explain this in more detail later.
In a later section, we will  apply the methods developed in this section to the example of a Moore space, i.e. the cofibre of the `multiplication by $p$' map $S^\ell \xrightarrow{p} S^\ell$.

For now, let $f\colon E \rightarrow F$ be a map of spectra. We first observe that we can write $\cof(f)^{\wedge n}$ as the total cofibre of a cubical $\Sigma_n$-diagram. We refer to \cite{cubical} for a detailed exposition of cubical homotopy theory. Indeed, write $\mathcal{P}(\underline{n})$ for the power set of $\underline{n} = \{1, \ldots, n\}$, which is partially ordered under inclusion. Let $\Delta^1$ denote the poset $\{0 < 1\}$ and observe that there is an evident identification $\mathcal{P}(\underline{n}) \cong (\Delta^1)^{\times n}$. Write $\mathrm{Sp}$ for the category of spectra and define a diagram $C_n(f):\mathcal{P}(\underline{n}) \rightarrow \mathrm{Sp}$ by taking the composition
\[
\mathcal{P}(\underline{n}) \cong (\Delta^1)^{\times n} \xrightarrow{f^{\times n}} \mathrm{Sp}^{\times n} \xrightarrow{\wedge^n}  \mathrm{Sp}.
\]
This is a cubical diagram with initial vertex $E^{\wedge n}$ and final vertex $F^{\wedge n}$. For a general subset $S \subset \underline{n}$, the corresponding value of $C_n(f)$ is given by  the smash product of $|S|$ copies of $Y$ with $|\underline{n} \backslash S|$ copies of $X$. The smash product $\cof(f)^{\wedge n}$ is now the total cofibre of $C_n(f)$. Writing $\mathcal{P}^-(\underline{n})  = \mathcal{P}(\underline{n}) \backslash \{ \underline{n}\}$, we have
\begin{equation*}
\cof(f)^{\wedge n} \simeq \cof (\myhocolim{} \, C_n(f)|_{\mathcal{P}^-(\underline{n})}  \rightarrow  F^{\wedge n}).
\end{equation*}
 
The punctured cube $\mathcal{P}^-(\underline{n})$ comes with an evident filtration by the size of subsets:
\[
\mathcal{P}^-(\underline{n})_{\leq 0} \subset \mathcal{P}^-(\underline{n})_{\leq 1} \subset \dots \subset \mathcal{P}^-(\underline{n})_{\leq {n-1}} = \mathcal{P}^-(\underline{n}).
\]
We shall write 
\[
C_n^k(f) := C_n(f)|_{\mathcal{P}^-(\underline{n})_{\leq k}}.
\]
We obtain a corresponding filtration of $\cof(f)^{\wedge n}$ by the (na\"ive) $\Sigma_n$-spectra $\cof^k_n(f)$ \mbox{defined by}
\begin{equation*}
\cof^k_n(f) := \cof\left(\myhocolim{}\,C_n^k(f) \rightarrow F^{\wedge n}\right).
\end{equation*}
for $k=0,\ldots,n-1$. We decree that $\cof^{-1}_n(f)  = 0$. 
\begin{lemma}
\label{lem:filtrations}
Given a map $f: E\rightarrow F$ of spectra, the filtration of $\Sigma_n$-spectra
\[ \cof^0_n(f) \rightarrow \dots \rightarrow \cof^{n-1}_n(f)  \cong \cof(f)^{\wedge n} \]
defined above has associated graded spectra $\gr_k(\cof(f)^{\wedge n})= \cof\left(\cof^{k-1}_n(f) \rightarrow \cof^k_n(f) \right) $ given by
\[
  \gr_k(\cof(f)^{\wedge n})  =
   \begin{cases}
     \cof(E^{\wedge n} \rightarrow F^{\wedge n}) & \text{for }   k=0, \\
     \Sigma \Ind_{\Sigma_{n-k}\times \Sigma_{k}}^{\Sigma_n} \left( E^{\wedge (n-k)}\wedge  \cof(f)^{\wedge k}\right) & \text{for }  1 \leq k \leq n-1.
   \end{cases}
\]
\end{lemma}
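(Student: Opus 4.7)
The plan is to analyse the filtration at the level of the punctured cube before mapping into $F^{\wedge n}$. Writing $L_k := \myhocolim{} C_n^k(f)$, we have by construction $\cof^k_n(f) = \cof(L_k \to F^{\wedge n})$, and the case $k=0$ is immediate from $L_0 = C_n(f)(\emptyset) = E^{\wedge n}$. For $1 \leq k \leq n-1$, the first step is to apply the octahedral axiom to the composable pair $L_{k-1} \to L_k \to F^{\wedge n}$ of $\Sigma_n$-spectra, producing a $\Sigma_n$-equivariant cofibre sequence
\begin{equation*}
\cof(L_{k-1} \to L_k) \longrightarrow \cof^{k-1}_n(f) \longrightarrow \cof^k_n(f).
\end{equation*}
Taking cofibres once more, this reduces the claim to exhibiting an equivariant equivalence $\cof(L_{k-1} \to L_k) \simeq \Ind_{\Sigma_{n-k}\times\Sigma_k}^{\Sigma_n}(E^{\wedge(n-k)} \wedge \cof(f)^{\wedge k})$, with the $\Sigma$ in the statement of the lemma accounted for by the connecting map.

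The second step is the standard cellular description of the skeletal filtration of a poset-indexed homotopy colimit: $L_k$ is obtained from $L_{k-1}$ by attaching, for each size-$k$ subset $S \subsetneq \underline{n}$, the vertex $C_n(f)(S)$ along its latching object $\myhocolim{T \subsetneq S} C_n(f)(T)$. Taking cofibres and bookkeeping the action of $\Sigma_n$ by permutation of subsets yields a $\Sigma_n$-equivariant splitting
\begin{equation*}
\cof(L_{k-1} \to L_k) \ \simeq\ \bigoplus_{|S|=k} \cof\left(\myhocolim{T \subsetneq S} C_n(f)(T) \to C_n(f)(S)\right),
\end{equation*}
whose $S$-summand is by definition the total cofibre of the sub-cube $C_n(f)|_{\mathcal{P}(S)}$.

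The third step is to evaluate each summand. For each $S$ the diagram $C_n(f)|_{\mathcal{P}(S)}$ is constant at $E$ in the $n-|S|$ coordinates outside $S$, so it factors as the external smash of the $|S|$-dimensional cube $C_{|S|}(f)$ with the fixed spectrum $E^{\wedge(n-|S|)}$. Since total cofibres commute with smashing by a fixed spectrum, each summand simplifies to $\cof(f)^{\wedge k} \wedge E^{\wedge(n-k)}$. The transitive $\Sigma_n$-action on size-$k$ subsets, with stabiliser $\Sigma_{n-k} \times \Sigma_k$ at the standard $S = \{n-k+1,\ldots,n\}$, then rewrites the direct sum as the induced representation appearing in the statement, completing the computation of $\gr_k$ for $k \geq 1$.

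I expect the main obstacle to be the equivariant bookkeeping in the second step: one must verify that the skeletal filtration of the punctured cube splits $\Sigma_n$-equivariantly, and that the action permuting subsets of size $k$ matches the action permuting smash factors in each summand $E^{\wedge (n-k)} \wedge \cof(f)^{\wedge k}$. Once this is in place, the octahedral reduction and the total-cofibre computation follow formally from standard properties of cubical diagrams.
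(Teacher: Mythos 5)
Your argument is correct and follows essentially the same path as the paper's own proof: the octahedral reduction $\gr_k \simeq \Sigma\,\cof(L_{k-1}\to L_k)$ is the paper's ``total cofibre of the square'' observation in disguise, and your skeletal/latching-object decomposition of $\cof(L_{k-1}\to L_k)$ as $\bigoplus_{|S|=k}\cof\bigl(\myhocolim{T\subsetneq S}C_n(f)(T)\to C_n(f)(S)\bigr)$ is exactly the paper's computation via the left Kan extension $LC_n^{k-1}(f)$, whose value at a size-$k$ subset $S$ is precisely that latching object. The ``obstacle'' you flag is not really one: the attaching pushout is manifestly $\Sigma_n$-equivariant (permuting the size-$k$ summands), and the identification of $\bigoplus_{|S|=k}E^{\wedge(\underline n\setminus S)}\wedge\cof(f)^{\wedge S}$ with the induced spectrum is immediate from the orbit--stabiliser description.
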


\begin{remark}
There is another (perhaps more standard) filtration of $\cof(f)^{\wedge n}$ defined as follows. One considers the map $F \rightarrow \cof(f)$ and smashes it with itself $n$ times to again obtain a cubical diagram $
\mathcal{P}(\underline{n}) \rightarrow \mathrm{Sp}.$ A $\Sigma_n$-equivariant filtration of $\cof(f)^{\wedge n}$ can be defined by taking the homotopy colimit over the restriction of this cubical diagram to each of the $\mathcal{P}(\underline{n})_{\leq k}$. The associated graded spectra of this filtration are
\begin{equation*}
\Ind_{\Sigma_{n-k} \times \Sigma_k}^{\Sigma_n} ((\Sigma E)^{\wedge (n-k)} \wedge F^{\wedge k}).
\end{equation*}
This filtration will not suffice for our purposes because the associated graded spectra do not depend on the map $f$.
\end{remark}

\begin{proof}[Proof of Lemma \ref{lem:filtrations}]
The claim for $k=0$ is evident. Assume $1\leq k \leq n-1$.
We can compute $\gr_k(\cof(f)^{\wedge n})$ as the total cofibre of the square 
\[
\begin{tikzcd}
\myhocolim{} \, C_n^{k-1}(f) \ar{r}\ar{d} & F^{\wedge n}  \ar{d} \\ 
\myhocolim{} \, C_n^k(f) \ar{r} & F^{\wedge n}, 
\end{tikzcd}
\]
which shows that 
\[ \gr_k(\cof(f)^{\wedge n}) \cong \Sigma  \cof\left( \myhocolim{} \, C_n^{k-1}(f) \rightarrow \myhocolim{} \,C_n^{k}(f)  \right). \]
The cofibre on the right-hand side can alternatively be computed by first taking the homotopy left Kan extension of the diagram $C_n^{k-1}(f)$ from $\mathcal{P}^-(\underline{n})_{\leq k-1}$ to $\mathcal{P}^-(\underline{n})_{\leq k}$ (we denote this extension by $LC_n^{k-1}(f)$), then taking the cofibre of the natural transformation from $LC_n^{k-1}(f)$ to the diagram $C_n^k(f)$, and finally taking the homotopy colimit over $\mathcal{P}^-(\underline{n})_{\leq k}$. Note that the diagram $LC_n^{k-1}(f)$ agrees with $C_n^k(f)$ when evaluated on subsets of size smaller than $k$. If $S$ has size exactly $k$, then one easily computes
\begin{equation*}
LC_n^{k-1}(f)(S) \simeq E^{\wedge (\underline{n} - S)} \wedge \myhocolim{} \, C_S(f)|_{\mathcal{P}^-(S)}, 
\end{equation*}
where $C_S(f)$ is equivalent to $C_k(f)$ after identifying $S$ with $\underline{k}$. However, it is of course better to think of it as a $k$-dimensional cubical diagram indexed on subsets of $S$. It follows that
\begin{eqnarray*}
& & \cof\bigl(\myhocolim{}\  LC_n^{k-1}(f) \rightarrow \myhocolim{} \ C_n^k(f)\bigr) \\& \simeq & \bigoplus_{|S| = k} E^{\wedge (\underline{n} - S)} \wedge \cof(\myhocolim{} \, C_S(f)|_{\mathcal{P}^-(S)} \rightarrow F^{\wedge S}) \\
& \simeq & \bigoplus_{|S| = k} E^{\wedge (\underline{n} - S)} \wedge \cof(f)^{\wedge S} \\
& \simeq & \mathrm{Ind}_{\Sigma_{n-k} \times \Sigma_k}^{\Sigma_n}\bigl( E^{\wedge (n-k)} \wedge \cof(f)^{\wedge k} \bigr),
\end{eqnarray*}
where we regard $\Sigma_{n-k} \times \Sigma_k$ as a subgroup of $\Sigma_n$ using the standard inclusion. This group acts on $E^{\wedge (n-k)} \wedge \cof(f)^{\wedge k}$ in the evident manner. The asserted claim follows.\end{proof}

Now let $\mathcal{O}$ be an operad in spectra, i.e., a sequence of spectra $\{\mathcal{O}(n)\}_{n \geq 0}$ with symmetric group actions and composition maps satisfying the usual axioms. The free (homotopy) $\mathcal{O}$-algebra on a spectrum $X$ is, up to equivalence, described by the formula
\begin{equation*}
\bigoplus_{n \geq 0} (\mathcal{O}(n) \wedge X^{\wedge n})_{h\Sigma_n}.
\end{equation*}
We apply Lemma \ref{lem:filtrations} to free $\mathcal{O}$-algebras on cofibres in two special cases of interest:

\begin{corollary}
Let $f\colon E \rightarrow F$ be a map of spectra. Then the $n^{th}$ summand in the free $\mathbf{E}_\infty$-algebra on $\mathrm{cof}(f)$ has a finite filtration whose associated graded spectra are given by
\[
  \gr_k\bigl( \mathrm{cof}(f)^{\wedge n}_{h\Sigma_n}\bigr)  =
   \begin{cases}
     \cof(E^{\wedge n} \rightarrow F^{\wedge n})_{h\Sigma_n} & \text{for }   k=0, \\
     \Sigma E^{\wedge (n-k)}_{h\Sigma_{n-k}} \wedge \cof(f)^{\wedge k}_{h\Sigma_k} & \text{for }  1 \leq k \leq n-1.
   \end{cases}
\]
\end{corollary}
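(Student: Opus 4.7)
The plan is to obtain the desired filtration of $\cof(f)^{\wedge n}_{h\Sigma_n}$ by taking $\Sigma_n$-homotopy orbits of the filtration already constructed in Lemma \ref{lem:filtrations}. Since $(-)_{h\Sigma_n}$ is a homotopy colimit, it preserves cofibre sequences in each variable, so applying it termwise to
$$\cof^0_n(f) \to \cof^1_n(f) \to \cdots \to \cof^{n-1}_n(f) \simeq \cof(f)^{\wedge n}$$
yields a filtration of $\cof(f)^{\wedge n}_{h\Sigma_n}$ whose associated graded spectra are the $\Sigma_n$-homotopy orbits of the associated graded spectra identified in Lemma \ref{lem:filtrations}.

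The $k=0$ term is immediate, yielding $\cof(E^{\wedge n} \to F^{\wedge n})_{h\Sigma_n}$. For $1 \leq k \leq n-1$, the key step is the standard identification of homotopy orbits of an induced spectrum: for any subgroup $H \leq G$ and any $H$-spectrum $Z$, one has a natural equivalence
$$\bigl(\Ind_H^G Z\bigr)_{hG} \simeq Z_{hH},$$
which follows formally from the fact that induction is left adjoint to restriction (or equivalently because $Ind_H^G Z \simeq G_+ \wedge_H Z$ and quotienting by $G$ then recovers the quotient by $H$). Applying this with $G = \Sigma_n$ and $H = \Sigma_{n-k} \times \Sigma_k$ to the $k$-th associated graded of Lemma \ref{lem:filtrations} gives
$$\Sigma \bigl(E^{\wedge(n-k)} \wedge \cof(f)^{\wedge k}\bigr)_{h(\Sigma_{n-k} \times \Sigma_k)}.$$

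To finish, I use that the two factors $E^{\wedge(n-k)}$ and $\cof(f)^{\wedge k}$ carry independent actions of $\Sigma_{n-k}$ and $\Sigma_k$ respectively, and that smash products commute with homotopy orbits of a commuting group action on one factor. This gives the clean factorisation
$$\Sigma E^{\wedge(n-k)}_{h\Sigma_{n-k}} \wedge \cof(f)^{\wedge k}_{h\Sigma_k},$$
exactly as claimed. There is no genuine obstacle here; the whole argument is formal manipulation, with the only subtle point being the natural equivalence $(\Ind_H^G Z)_{hG} \simeq Z_{hH}$, which is entirely standard.
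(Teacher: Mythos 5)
Your proof is correct and is precisely the (implicit) argument the paper intends: the corollary is stated without proof immediately after Lemma \ref{lem:filtrations}, and the expected derivation is exactly your sequence of observations—apply $\Sigma_n$-homotopy orbits to the filtration from that lemma, use that homotopy orbits preserve cofibre sequences, invoke $(\Ind_H^G Z)_{hG}\simeq Z_{hH}$, and then factor the $(\Sigma_{n-k}\times\Sigma_k)$-homotopy orbits across the smash product. Nothing is missing.
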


The example we are most interested in here is the case where $\mathcal{O} = \partial_* \mathrm{id}$ is the operad given by the derivatives of the identity on $\mathcal{S}_*$. Recall that $\partial_n \mathrm{id}$ is given by the Spanier-Whitehead dual of the suspended \emph{partition complex} \cite{2}, \cite{Joh}.  More precisely, we write $\Pi_n$ for the poset of proper nondiscrete partitions of the set $\{1,\ldots,n\}$ and $|\Pi_n|$ for the geometric realisation of its nerve. We then have a $\Sigma_n$-equivariant equivalence $\partial_n \mathrm{id} \simeq \DD(\Sigma |\Pi_n|^{\diamond})$. Here $(-)^\diamond$ denotes the unreduced suspension and $\DD$ stands for Spanier-Whitehead duality.

For a spectrum $E$, we write
\begin{equation*}
\mathbf{D}_n E = (\partial_n \mathrm{id} \wedge E^{\wedge n})_{h\Sigma_n}.
\end{equation*}
If $E = \Sigma^\infty X$ for some pointed space $X$, then  this agrees with the $n$th Goodwillie layer of $X$. We remark once more that we often abuse notation and denote a space and its suspension spectrum by the same name.

\begin{lemma}
\label{lem:filtrationGoodwillie}
Let $f\colon E \rightarrow F$ be a map of spectra. Then $\mathbf{D}_n\cof(f)$ has a finite filtration whose $k^{th}$ graded piece $ \gr_k\bigl(\mathbf{D}_n\cof(f)\bigr)$ is given by   
\[
   \begin{cases}
  \ \ \ \    \cof(\mathbf{D}_n E \rightarrow \mathbf{D}_n F) & \text{for }   k=0, \\ \\
   \displaystyle \bigoplus_{\substack{d \ |\ k,n-k \\ w\in B(\frac{n-k}{d}, \frac{k}{d})}} \Sigma  \mathbf{D}_d\Sigma((\Sigma^{-1}E)^{\wedge\frac{n-k}{d}} \wedge (\Sigma^{-1}\cof(f))^{\wedge\frac{k}{d}}) & \text{for }  1 \leq k \leq n-1.
   \end{cases}
\]
Here the sum ranges over numbers $d$ dividing both $k$ and $n-k$ and elements of the set $B(\frac{n-k}{d},\frac{k}{d})$. Recall that $B(i,j)$ denotes the set of words in the basis $\mathbf{L}_2$ which involve the first letter $i$ times and the second letter $j$ times.
\end{lemma}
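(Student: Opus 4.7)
The plan is to apply the functor $\mathbf{D}_n = (\partial_n \id \wedge (-)^{\wedge n})_{h\Sigma_n}$ to the $\Sigma_n$-equivariant filtration of $\cof(f)^{\wedge n}$ produced by Lemma \ref{lem:filtrations}. Since smashing with $\partial_n \id$ and passing to homotopy $\Sigma_n$-orbits both preserve cofibre sequences, this induces a finite filtration of $\mathbf{D}_n \cof(f)$ whose $k$-th associated graded piece is
$$\gr_k \mathbf{D}_n \cof(f) \simeq (\partial_n \id \wedge \gr_k \cof(f)^{\wedge n})_{h\Sigma_n}.$$
It therefore suffices to evaluate the right-hand side in each of the two cases produced by Lemma \ref{lem:filtrations}.

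For $k = 0$, the claim is immediate: since $\gr_0 \cof(f)^{\wedge n} = \cof(E^{\wedge n} \to F^{\wedge n})$ and $\mathbf{D}_n$ preserves cofibre sequences, the graded piece is simply $\cof(\mathbf{D}_n E \to \mathbf{D}_n F)$. For $1 \leq k \leq n-1$, the graded piece of $\cof(f)^{\wedge n}$ is $\Sigma \, \Ind_{\Sigma_{n-k} \times \Sigma_k}^{\Sigma_n}(E^{\wedge (n-k)} \wedge \cof(f)^{\wedge k})$, and the standard induced-orbit identity $(A \wedge \Ind_H^G B)_{hG} \simeq (A \wedge B)_{hH}$ collapses the $\Sigma_n$-orbit to
$$\Sigma \bigl(\partial_n \id \wedge E^{\wedge (n-k)} \wedge \cof(f)^{\wedge k}\bigr)_{h(\Sigma_{n-k} \times \Sigma_k)}.$$

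The remaining step is to identify this expression with $\Sigma \bigoplus_{d,w} \mathbf{D}_d \Sigma\bigl((\Sigma^{-1}E)^{\wedge (n-k)/d} \wedge (\Sigma^{-1}\cof(f))^{\wedge k/d}\bigr)$, where the sum is indexed as in the statement. This is precisely the content of the spectrum-level Arone-Kankaanrinta decomposition of the bidegree $(n-k,k)$ part of $\mathbf{D}_n(\Sigma X_1 \vee \Sigma X_2)$, upon setting $X_1 = \Sigma^{-1}E$ and $X_2 = \Sigma^{-1}\cof(f)$. Indeed, for each $w \in B\bigl(\tfrac{n-k}{d}, \tfrac{k}{d}\bigr)$ one has $\Sigma w(X_1,X_2) = \Sigma\bigl((\Sigma^{-1}E)^{\wedge (n-k)/d} \wedge (\Sigma^{-1}\cof(f))^{\wedge k/d}\bigr)$, so the decomposition matches the desired formula term by term.

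The main technical obstacle is that the Arone-Kankaanrinta theorem as quoted in the introduction is formulated at the level of $\Omega^\infty$ and of spaces, whereas we need its spectrum-level content for an arbitrary pair of spectra. One either appeals to Lemma 1.3 of \cite{3}, which already provides the requisite $\Sigma_{n-k} \times \Sigma_k$-equivariant decomposition of $\partial_n \id$ smashed with the relevant smash powers, or observes that the Hilton-Milnor derivation of the Arone-Kankaanrinta formula is in fact intrinsic to the spectral operad $\partial_* \id$ and goes through verbatim for spectra. Beyond this identification, the argument is purely a matter of exactness of $\mathbf{D}_n$ on the filtration from Lemma \ref{lem:filtrations}.
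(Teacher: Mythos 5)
Your plan follows the same overall route as the paper: apply $\mathbf{D}_n = (\partial_n\id \wedge (-)^{\wedge n})_{h\Sigma_n}$ to the filtration of $\cof(f)^{\wedge n}$ from Lemma \ref{lem:filtrations}, observe that this preserves cofibre sequences, handle $k=0$ directly, and for $1 \leq k \leq n-1$ use the projection/induced--orbit formula to arrive at $\Sigma\bigl((\Res^{\Sigma_n}_{\Sigma_{n-k}\times\Sigma_k}\partial_n\id)\wedge E^{\wedge(n-k)}\wedge\cof(f)^{\wedge k}\bigr)_{h(\Sigma_{n-k}\times\Sigma_k)}$, then invoke the Arone--Kankaanrinta decomposition of $\partial_n\id$ restricted to a Young subgroup. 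You also correctly identify the one point that needs care: the result of \cite{3} is proved for suspensions of spaces, whereas here $E$ and $\cof(f)$ are arbitrary spectra.

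However, the two ways you propose to close that gap are both looser than what is required, and the first is a mis-citation. Lemma 1.3 of \cite{3} is the identification of the multi-derivative of $G\circ\vee$ used in the proof of Lemma \ref{lem:key}; the partition-indexed decomposition of $\Res^{\Sigma_n}_{\Sigma_{n-k}\times\Sigma_k}\partial_n\id$ you actually need is Theorem 0.1 of \cite{3}, which is established there at the space level. And asserting that the Hilton--Milnor derivation ``goes through verbatim for spectra'' is not self-justifying, since Hilton--Milnor is an unstable statement. The paper instead makes the transition precise: both sides of the Arone--Kankaanrinta equivalence define $(n-k,k)$-homogeneous functors from spaces to spectra that agree on inputs of the form $\Sigma^\infty\Sigma X_i$, and a homogeneous functor from spectra to spectra is determined by its multi-derivatives, which depend only on values on highly connected spaces; therefore the equivalence extends to arbitrary spectral inputs. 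Replacing your two heuristic alternatives with this homogeneous-functor / Goodwillie-derivative argument brings your proof in line with the paper's.
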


\begin{proof}
The case $k=0$ follows directly from Lemma \ref{lem:filtrations} after smashing with $\partial_n\mathrm{id}$ and taking homotopy orbits for $\Sigma_n$. For  $k \geq 1$, we combine Lemma \ref{lem:filtrations} with the projection formula and read off an equivalence between $\bigl(\partial_n \mathrm{id} \wedge \gr_k(\cof(f)^{\wedge n})\bigr)_{h\Sigma_n}$ and 
\begin{equation*}
 \Sigma\left((\Res^{\Sigma_n}_{\Sigma_{n-k} \times \Sigma_k}\partial_n\mathrm{id}) \wedge E^{\wedge (n-k)}\wedge\cof(f)^{\wedge k}\right)_{h(\Sigma_{n-k} \times \Sigma_k)}.
\end{equation*}
Theorem $0.1$ of \cite{3} gives, for $k_1 + \cdots + k_m = n$, a natural equivalence between
\begin{equation*}
\bigl((\Res^{\Sigma_n}_{\Sigma_{k_1} \times \cdots \times \Sigma_{k_m}}\partial_n\mathrm{id}) \wedge Y_1^{\wedge k_1} \wedge \cdots \wedge Y_m^{\wedge k_m}\bigr)_{h(\Sigma_{k_1} \times \cdots \times \Sigma_{k_m})} 
\end{equation*}
and
\begin{equation*}
\bigoplus_{\substack{d \ |\ k_1, \ldots, k_m\\ B(\frac{k_1}{d}, \ldots, \frac{k_m}{d})}}  \Bigl(\partial_d \mathrm{id} \wedge \left(\Sigma((\Sigma^{-1}Y_1)^{\wedge \frac{k_1}{d}} \wedge \cdots \wedge (\Sigma^{-1}Y_m)^{\wedge \frac{k_m}{d}})\right)^{\wedge d} \Bigr)_{h\Sigma_d} 
\end{equation*}
when each $Y_i$ is of the form $\Sigma^\infty\Sigma X_i$ for some connected space $X_i$. In other words, the $(k_1, \ldots,k_m)$-homogeneous functors from spaces to spectra defined by these two expressions are naturally equivalent when evaluated on suspensions of connected spaces. But using Goodwillie's correspondence between homogeneous functors from spaces to spectra and homogeneous functors from spectra to spectra (implemented by taking derivatives, cf. \cite{1}), it follows that the equivalence above can in fact be defined for any collection of spectra $Y_1, \ldots, Y_n$ (since the Goodwillie derivatives of a functor only depend on its values on highly connected spaces). 
Applying this equivalence to our expression for $\bigl(\partial_n \mathrm{id} \wedge \gr_k(\cof(f)^{\wedge n})\bigr)_{h\Sigma_n}$ above gives the conclusion of the lemma.
\end{proof}

\begin{remark}
A different proof of Lemma \ref{lem:filtrationGoodwillie} can be given using recent work of Arone and the first author (cf. Theorem 5.10. of \cite{ab}), which studies the (unstable) equivariant homotopy type of the partition complex directly.
\end{remark}

\section{Divergence on Moore Spaces}\label{sec:5} 

We expect that the results of Cohen--Moore--Neisendorfer \cite{4} on splittings of the loop space of a Moore space together with methods analogous to our Theorem \ref{thm:HMGoodwillie} should give a detailed understanding of the $v_1$-periodic Goodwillie tower of a Moore space. However, our goal in this section is only to prove divergence, which we achieve by a series of cheap (but rather effective) swindles. To present our arguments in their simplest form we take $p$ to be an odd prime, but there is no essential difficulty in covering the case $p=2$ as well.
\begin{lemma}
\label{lem:Moorelayers}
Let $M^\ell = S^\ell/p$ be a mod $p$ Moore space. Then infinitely many Goodwillie layers of $M^\ell$ have nonzero $v_1$-periodic homotopy groups.
\end{lemma}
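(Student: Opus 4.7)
The plan is to apply Lemma \ref{lem:filtrationGoodwillie} with $f = p \colon S^\ell \to S^\ell$ (whose cofibre is $M^\ell$), producing a finite filtration of $\mathbf{D}_n M^\ell$ whose graded pieces are: $G_0 = \cof(\mathbf{D}_n S^\ell \xrightarrow{p^n} \mathbf{D}_n S^\ell)$ (with $p^n$ arising because $\mathbf{D}_n$ is $n$-homogeneous), and for $1 \leq k \leq n-1$, a direct sum over $d \mid \gcd(k, n-k)$ and $w \in B((n-k)/d, k/d)$ of the spectra $\Sigma \mathbf{D}_d \Sigma((\Sigma^{-1} S^\ell)^{\wedge (n-k)/d} \wedge (\Sigma^{-1} M^\ell)^{\wedge k/d})$.

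I would then simplify this filtration after applying $\Phi_v \simeq L_{T(1)}$. The Arone-Mahowald theorem (Theorem \ref{thm:AM}) says that $\mathbf{D}_n S^\ell$ is $T(1)$-trivial for $n \notin \{1, p, 2p\}$, killing the $k = 0$ piece for such $n$. The same Steinberg-summand analysis of the partition complex gives $L_{T(1)} \mathbf{D}_d(Z) \simeq 0$ for any bounded-below finite-type spectrum $Z$ whenever $d \notin \{1, p\}$. Restricting to $n$ coprime to $p$ rules out $d = p$ from the divisibility constraint $d \mid \gcd(k, n-k)$, so only the $d = 1$ contributions survive $T(1)$-locally, and each $G_k$ reduces to
$$\bigoplus_{w \in B(n-k, k)} \Sigma^{(\ell-1)(n-k) + 2}(M^{\ell-1})^{\wedge k},$$
a direct sum of suspensions of smash powers of the type-$1$ Moore spectrum $M^{\ell-1}$, each $T(1)$-locally nontrivial.

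To conclude that these nonzero pieces do not collectively cancel in the filtration spectral sequence computing $K(1)^* \mathbf{D}_n M^\ell$, I would isolate the top $K(1)$-cohomology class of the top graded piece $G_{n-1}$: by Witt's formula $|B(1, n-1)| = 1$, so this is a single class living in degree $n\ell + 1$ (coming from the top cell of $(M^{\ell-1})^{\wedge (n-1)}$). A cell-dimension count shows that $G_k$ for $k < n - 1$ reaches only degrees $\leq n\ell$, so our top class is isolated in the $E_1$-page and cannot receive any differential $d_r$ with $r \geq 2$. The sole potential threat is $d_1 \colon K(1)^{n\ell} G_{n-2} \to K(1)^{n\ell + 1} G_{n-1}$, which is dual to the connecting map between the last two filtration stages in Lemma \ref{lem:filtrations}. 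The key observation is that this connecting map is inherited from the cubical diagram $C_n(p)$ underlying the filtration, whose edges are all multiplication-by-$p$ maps on a sphere factor -- hence zero on $K(1)^*$. Consequently $d_1$ vanishes on our top class, so $L_{T(1)} \mathbf{D}_n M^\ell \not\simeq 0$ for every $n$ coprime to $p$, giving the desired infinite family.

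The hardest step will be making precise the claim that the relevant $d_1$-differential vanishes. Although the heuristic via the $p$-edges of the cube is compelling, making it rigorous requires tracing the attaching maps carefully through the cubical construction of Lemma \ref{lem:filtrations} and verifying that the induced connecting maps factor, on $K(1)$-theory, through multiplication-by-$p$ operations. If this direct analysis proves too intricate, a quantitative parity or dimension-counting argument on the $E_1$-page serves as a fallback.
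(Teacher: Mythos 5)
Your starting point is the same as the paper's: use the filtration of Lemma \ref{lem:filtrationGoodwillie} with $f = p : S^\ell \to S^\ell$, and argue with $p$-adic $K$-theory. The restriction to $n$ coprime to $p$ to force $d=1$ is also in the right spirit (the paper takes $n$ prime, which kills the $d>1$ terms by pure arithmetic and avoids invoking any $T(1)$-vanishing of $\mathbf{D}_d$; that is cleaner, since your claim that $L_{T(1)}\mathbf{D}_d Z \simeq 0$ for $d\notin\{1,p\}$ is in fact false for $d = p^2, p^3, \ldots$, and what you really use is the vanishing when $d$ is not a power of $p$). So far so good.

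The gap is in the step where you try to isolate the top $K(1)$-cohomology class of $G_{n-1}$ in degree $n\ell+1$. First, the claim that only a $d_1$ from $G_{n-2}$ can threaten it is wrong: with $d=1$ the graded piece $G_k$ is a wedge of suspensions of Moore spectra with cells in degrees $[\,2+(\ell-1)n,\ 2+(\ell-1)n+k\,]$, so for $n$ a little larger than, say, $r+3$ the piece $G_{n-1-r}$ still has cells of both parities, and since $K(1)^*$ is $2$-periodic the group $K(1)^{n\ell}G_{n-1-r}$ is nonzero for all $r\geq 1$. Hence $d_r$-differentials for every $r\geq 1$ are a priori possible targets for your top class, not just $d_1$. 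Second, even the vanishing of $d_1$ is not established: the connecting map of the filtration is not a single multiplication-by-$p$ edge of the cube but a homotopy colimit of the whole punctured-cube combinatorics after a left Kan extension, and the heuristic that ``the edges are $p$-maps, hence zero on $K(1)^*$'' does not obviously survive that process. You acknowledge this is the hard step; it is, and I don't think it can be patched in the form stated.

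The paper sidesteps both of these problems with an Euler-characteristic argument, which is essentially the ``parity/dimension-counting fallback'' you gesture at in your last sentence. The $E_1$-page is a finitely generated graded $\mathbb{F}_p[u^{\pm 1}]$-module, all differentials are $\mathbb{F}_p[u^{\pm 1}]$-linear and of odd degree, so they preserve the Euler characteristic
$\chi = \mathrm{rk}\,N^{\mathrm{ev}} - \mathrm{rk}\,N^{\mathrm{odd}}$.
One then computes $\chi(E_1)$ termwise: the $k=0$ piece is null for $n>2p$ by Arone--Mahowald; the $k\geq 2$ pieces have $\chi = 0$ because $(M^{\ell-1})^{\wedge k}$ splits (Cohen--Moore--Neisendorfer) into Moore spectra with multiplicities $\binom{k-1}{j}$ whose alternating sum vanishes; and the $k=1$ piece contributes $\pm 1$ since $|B(n-1,1)|=1$. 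Thus $\chi(E_\infty) = \pm 1 \neq 0$, so $K(1)_*\mathbf{D}_n M^\ell \neq 0$ for every prime $n > 2p$, with no need to analyse any differential. I'd recommend you replace the top-class survival argument with this Euler-characteristic computation.
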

In order to prove this lemma we introduce a certain notion of Euler characteristic:
\begin{definition}
Let $N$ be a finitely generated graded module over the graded field $\mathbb{F}_p[u^{\pm 1}]$ with $|u|=2$. Then we define the Euler characteristic of $N$ by
\begin{equation*}
\chi(N) = \mathrm{rk}_{\mathbb{F}_p[u^{\pm 1}]} N^{\mathrm{ev}} - \mathrm{rk}_{\mathbb{F}_p[u^{\pm 1}]} N^{\mathrm{odd}}.
\end{equation*}
Here $N^{\mathrm{ev}}$ (resp. $N^{\mathrm{odd}}$) is the even-dimensional (resp. odd-dimensional) part of $N$ and $\mathrm{rk}_{{\mathbb{F}_p[u^{\pm 1}]}}$ denotes the rank of a module. 
\end{definition}

Now say $N$ is a finitely generated graded module as in the previous definition equipped with a differential $d$ of odd degree, i.e., $d = d_0 \oplus d_1$ with
\begin{equation*}
d_0\colon N^{\mathrm{ev}} \rightarrow N^{\mathrm{odd}} \quad\quad \text{and} \quad\quad d_1\colon N^{\mathrm{odd}} \rightarrow N^{\mathrm{ev}}.
\end{equation*}
Then clearly $\chi(H_*(N, d)) =\chi(N)$ since
\begin{eqnarray*} 
  & \bigl(\mathrm{rk}_{\mathbb{F}_p[u^{\pm 1}]}\mathrm{ker}(d_0) - \mathrm{rk}_{\mathbb{F}_p[u^{\pm 1}]}\mathrm{im}(d_1)\bigr) - \bigl(\mathrm{rk}_{\mathbb{F}_p[u^{\pm 1}]}\mathrm{ker}(d_1) - \mathrm{rk}_{\mathbb{F}_p[u^{\pm 1}]}\mathrm{im}(d_0)\bigr) \\
  = & \bigl(\mathrm{rk}_{\mathbb{F}_p[u^{\pm 1}]}\mathrm{ker}(d_0) + \mathrm{rk}_{\mathbb{F}_p[u^{\pm 1}]}\mathrm{im}(d_0)\bigr) - \bigl(\mathrm{rk}_{\mathbb{F}_p[u^{\pm 1}]}\mathrm{im}(d_1) +  \mathrm{rk}_{\mathbb{F}_p[u^{\pm 1}]}\mathrm{ker}(d_1)\bigr).
\end{eqnarray*}


\begin{proof}[Proof of  Lemma \ref{lem:Moorelayers}]
It suffices to show that infinitely many Goodwillie layers of $M^\ell$ have nonvanishing (completed) 
 $p$-adic $K$-theory (e.g. compare 3.7 of \cite{bousfieldtelescopic}). We will achieve this by analyzing the $K$-homology of $\mathbf{D}_n M^\ell$ and showing that it is a graded $\mathbb{F}_p[u^{\pm 1}]$-module of non-zero Euler characteristic for infinitely many values of $n$.

Let $n$ be a \textit{prime} and consider the filtration of $\mathbf{D}_n M^\ell$ given by Lemma \ref{lem:filtrationGoodwillie}, i.e., we take $E=F=S^\ell$ and $f$ the multiplication by $p$. Since by \cite{2} the $v_1$-periodic Goodwillie tower of a sphere becomes constant after stage $p$ or $2p$ (depending on the parity of $\ell$), we know that for $n > 2p$ the $k=0$ graded piece of that filtration is null. We can use the $p$-adic $K$-theory of the remaining pieces with $1 \leq k \leq n-1$ as input for a spectral sequence, consisting of $n-1$ lines, converging to the $p$-adic $K$-theory of $\mathbf{D}_n M^\ell$. Since $n$ is prime, the greatest common divisor of $k$ and $(n-k)$ is $1$. By Lemma \ref{lem:filtrationGoodwillie}, the $k^{th}$ graded piece $ \gr_k (\mathbf{D}_n M^\ell)$ is given by the spectrum
\begin{equation*}
 \bigoplus_{\substack{ w\in B( n-k, k )}}     \Sigma^{2-n+\ell(n-k)} (M^\ell)^{\wedge k}.   
\end{equation*}
Neisendorfer (cf. Theorem 4.1. in \cite{5}) proves that
\begin{equation*}
M^{\ell_1} \wedge M^{\ell_2} \simeq M^{\ell_1 + \ell_2} \vee M^{\ell_1 + \ell_2 + 1}
\end{equation*}
whenever $\ell_1,\ell_2\geq 1$.
Iterating this splitting shows that
\begin{equation*}
(M^{\ell})^{\wedge k} \simeq \bigvee_{j=0}^k {{k-1}\choose{j}} M^{k\ell + j}.
\end{equation*} 
In particular, the associated graded spectra $\gr_k (\mathbf{D}_n M^\ell)$ are themselves wedge sums of Moore spectra and their $p$-adic $K$-theory is therefore $p$-torsion. Any finitely generated $p$-torsion abelian group is an $\mathbb{F}_p$-vector space in a unique way, and a homomorphism between such groups is automatically $\mathbb{F}_p$-linear. Therefore, the $E_1$-page of our spectral sequence is a module over the graded field $\mathbb{F}_p[u^{\pm 1}]$, with $u$ denoting the Bott class, and the differentials are $\mathbb{F}_p[u^{\pm 1}]$-linear. Moreover, all differentials are of odd degree, so that the Euler characteristic of the $E_1$-page is the same as that of the $E_\infty$-page. The Euler characteristic of $K_*(M^\ell)$ is of course $1$ or $-1$, depending on the parity of $\ell$. Note that if $k \geq 2$, then the Euler characteristic of $K_*((M^{\ell})^{\wedge k})$ is zero because the alternating sum of binomial coefficients vanishes. Therefore the corresponding lines in the spectral sequence, being sums of such modules, have zero Euler characteristic. For the $k=1$ line, however, the spectrum under consideration is
\begin{equation*}
    \bigoplus_{\substack{ w\in B( n-1, 1 )}}     \Sigma^{2-n+\ell(n-1)} M^\ell.
\end{equation*}
Its $p$-adic $K$-theory is either completely in even or completely in odd degrees and in particular has nonzero Euler characteristic. Thus the $E_\infty$-page has nonzero Euler characteristic as well. In particular it is nontrivial, so that $K_*(\mathbf{D}_n M^\ell)$ cannot vanish.
\end{proof}

Finally we wish to prove Theorem \ref{thm:Moorespaces}, which states that the $v_1$-periodic Goodwillie tower cannot converge on $M^\ell$. 
To do this, it is useful to know that the tower
$
\{\Phi_v P_n(M^\ell)\}_{n \geq 1}
$
is \emph{split} (at least for $\ell$ large enough), meaning that each stage is simply the finite sum of its homogeneous layers:
\begin{equation*}
\Phi_v P_n(M^\ell) \simeq \bigoplus_{k=1}^n \Phi_v \Omega^\infty(\mathbf{D}_k(M^\ell)).
\end{equation*}

\begin{theorem}
\label{lem:splittower}
If $p$ is odd and $\ell \geq 5$, the tower $\{\Phi_v P_n(M^\ell)\}_{n \geq 1}$ is split.
\end{theorem}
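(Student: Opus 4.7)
My plan is to reduce the theorem to the proposition stated immediately above. Since $M^\ell \simeq \Sigma^2 M^{\ell-2}$, and since for $p$ odd and $\ell - 2 \geq 3$ the Moore space $M^{\ell-2}$ is a finite type-$1$ space carrying a $v_1$ self-map $\Sigma^{2(p-1)} M^{\ell-2} \to M^{\ell-2}$ (by Adams), applying the proposition with $h = 1$ and $X = M^{\ell-2}$ immediately yields the splitting asserted for $\{\Phi_v P_n(M^\ell)\}$. The substance therefore lies in the proposition itself, which I now sketch.

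Fix a finite type-$h$ space $X$ together with a $v_h$ self-map $v:\Sigma^d X \to X$. I would argue by induction on $n$ that the fibre sequence of $T(h)$-local spectra
\begin{equation*}
\Phi_v D_n(\Sigma^2 X) \longrightarrow \Phi_v P_n(\Sigma^2 X) \longrightarrow \Phi_v P_{n-1}(\Sigma^2 X)
\end{equation*}
admits a section compatible with those constructed at earlier stages; splicing these together then produces the natural equivalence $\Phi_v P_n(\Sigma^2 X) \simeq \bigoplus_{k=1}^n \Phi_v D_k(\Sigma^2 X)$. Equivalently, the $k$-invariants $\kappa_n$ of the tower should be null after applying $\Phi_v$. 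The base case $n = 1$ is immediate, since $P_1(\Sigma^2 X) = Q\Sigma^2 X$ is already the first layer.

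The main leverage is the self-map: by naturality of the Goodwillie tower, $\Sigma^2 v$ induces a compatible family of self-maps on $\{P_n(\Sigma^2 X)\}_n$, which after applying $\Phi_v$ become periodicity equivalences. Each $\Phi_v P_n(\Sigma^2 X)$ is therefore a module over the periodic spectrum $\Phi_v \Sigma^2 X$, and every $\kappa_n$ is a module map. Together with the identification $\mathbf{D}_n(\Sigma^2 X) \simeq \Sigma^{2n}\mathbf{D}_n(X)$ (homotopy orbits commute with smashing by a sphere), this reduces the task to the $\Phi_v$-vanishing of the $k$-invariants of the identity Goodwillie tower evaluated on a $2$-fold suspension.

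The principal obstacle is precisely this vanishing. The $k$-invariants encode the cooperadic structure on $\{\mathbf{D}_n\}$ coming from the partition-complex operad $\partial_* \mathrm{id}$, and their triviality on a suspension should reflect the fact that the associated spectral Lie coalgebra is \emph{free} on the generator $\Sigma^2 X$. I would attempt to establish this either by factoring $\kappa_n$ through a Hilton--Milnor-type decomposition of $\Omega \Sigma^2 X$ and killing the resulting composite via periodicity together with the Hopkins--Smith nilpotence available on type-$h$ spaces, or by appealing to the spectral Lie algebra perspective developed in \cite{heuts}. Making this factorisation precise, and extracting enough naturality to assemble the sections coherently as $n$ varies, is the technical heart of the argument.
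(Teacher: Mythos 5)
Your reduction of the theorem to the un-numbered proposition in the introduction is correct and matches the paper's perspective: $M^\ell \simeq \Sigma^2 M^{\ell-2}$, and for $p$ odd, $\ell - 2 \geq 3$, the Moore space $M^{\ell-2}$ is a finite type-$1$ space with an Adams $v_1$ self-map. But the proposal for the proposition itself stops precisely where the work begins: you never actually show that the $k$-invariants $\kappa_n$ become null after applying $\Phi_v$. The two routes you float (a Hilton--Milnor-style factorisation killed by periodicity/nilpotence, or the spectral Lie algebra formalism of \cite{heuts}) are left entirely speculative, and even granting that each $\kappa_n$ could be killed individually, you would still face the coherence problem of choosing the sections compatibly as $n$ varies --- which your inductive framing acknowledges but does not resolve. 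Also, the identification $\mathbf{D}_n(\Sigma^2 X) \simeq \Sigma^{2n}\mathbf{D}_n(X)$ you invoke is false as stated: $(S^2)^{\wedge n}$ carries the nontrivial complex permutation $\Sigma_n$-action, which changes the homotopy orbits, so the suspension does not simply pass out of $\mathbf{D}_n$.

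The paper's argument is genuinely different and in particular circumvents both difficulties at once. Working in Bousfield's telescopic localisation $L_h^f\mathcal{S}_*^{(p)}\langle d_h\rangle$ and using his left Quillen functor $\Theta: \mathrm{Sp}_{T(h)} \to L_h^f\mathcal{S}_*^{(p)}\langle d_h\rangle$, one combines the facts that $L_h^f$ preserves homotopy colimits and finite homotopy limits, that it does not change $v_h$-periodic homotopy, and that a double suspension of a space $X$ with a $v_h$ self-map satisfies $L_h^f\Sigma^2 X \simeq \Theta L_{T(h)}\Sigma^{\infty+2}X$. This yields a natural identification $\Phi_v P_n(\Sigma^2 X) \simeq P_n(\Phi_v\Theta)(L_{T(h)}\Sigma^{\infty}\Sigma^2 X)$. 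Now $\Phi_v\Theta$ is a functor from $T(h)$-local spectra to $T(h)$-local spectra, and for \emph{any} such functor the Goodwillie tower splits naturally, by Kuhn's Tate-vanishing theorem \cite{kuhntate}. The splitting, and its naturality in $n$, are thus supplied wholesale by Kuhn's theorem rather than extracted $k$-invariant by $k$-invariant. To turn your outline into a proof you would either need to reproduce essentially this chain of identifications, or give a different argument for the vanishing of the $k$-invariants together with their coherence; neither is currently present.
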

\begin{proof} 
The following proof is a condensed version of a more detailed analysis of Goodwillie calculus in $v_h$-periodic homotopy theory appearing in \cite{heuts}.
Following Bousfield \cite{bousfieldloc}\cite{bousfieldtelescopic}, we let $V_{h+1}$ be a finite space of type $h+1$ which is also a suspension.
Write $d_h$ for the dimension in which the first nonvanishing homotopy group of $V_{h+1}$ occurs.
We denote the category of  $d_h$-connected, pointed, and $p$-local spaces by $\mathcal{S}_{\ast}^{(p)}\langle d_h \rangle$. 
Consider the full subcategory 
$L_h^f\mathcal{S}_{\ast}^{(p)}\langle d_h \rangle$ spanned by all spaces $Y$ for which the map $ Y\rightarrow \Map(V_{h+1},Y) $
induced by pullback against $V_{h+1} \rightarrow \ast$ is an equivalence of unpointed spaces.
We obtain the left Bousfield localisation
\[
\begin{tikzcd}
\mathcal{S}_{\ast}^{(p)}\langle d_h \rangle \ar[shift left]{r}{L_h^f} & L_h^f\mathcal{S}_{\ast}^{(p)}\langle d_h \rangle. \ar[shift left]{l}{\iota}
\end{tikzcd}
\]


In \cite{bousfieldtelescopic}, Bousfield constructs a left Quillen functor 
\[\Theta\colon \mathrm{Sp}_{T(h)}\rightarrow L_h^f\mathcal{S}_{\ast}^{(p)}\langle d_h \rangle, \]
where $\mathrm{Sp}_{T(h)}$ denotes the localisation of the category of spectra with respect to $T(h)$, as before. This functor gives a derived left adjoint to the Bousfield--Kuhn functor $\Phi$, although we will not need this.

We state several facts about the left adjoint $L_h^f$:
\begin{itemize}
\item[(A)] The functor $L_h^f$ does not affect $v_h$-periodic homotopy groups.
\item[(B)] The functor $L_h^f$ preserves homotopy colimits.
\item[(C)] Any space $X$ with a $v_h$-self map is `almost' in the image of $\Theta$, in the sense that for such $X$ there is an equivalence $L_h^f \Sigma^2 X \cong \Theta L_{T(h)}\Sigma^{\infty+2} X$.
\item[(D)] The functor $L_h^f: \mathcal{S}_{\ast}^{(p)}\langle d_h \rangle \rightarrow \mathcal{S}_{\ast}^{(p)}\langle d_h \rangle $ preserves finite homotopy limits. Here and in the remainder of this article, we  omit the functor $\iota$ from our notation.
\end{itemize}
Statement (A) is proven in 4.6 of \cite{bousfieldtelescopic}. For (B), we use that $L_h^f$ is a left Quillen functor. Statement (C) is established in 5.9 of \cite{bousfieldtelescopic}. Statement (D) is Theorem 3.8 of \cite{heuts}. In fact, all four of these statements are summarised in Section 3 of op. cit.

We can relate the Goodwillie towers of the identity on the homotopy theories ${\mathcal{S}^{(p)}_\ast}\langle d_h \rangle$, and $L_h^f {\mathcal{S}_\ast^{(p)}}\langle d_h \rangle$ as follows. First, for $X$ a $d_h$-connected space, we have
\[ P_n(\id_{\mathcal{S}_\ast^{(p)}})(X) \cong P_n(\id_{\mathcal{S}_\ast^{(p)} \langle d_h \rangle})(X). \]
We now use facts (B) and  (D) to compute 
\[ L_h^f P_n(\id_{\mathcal{S}_\ast^{(p)}\langle d_h \rangle}) (X) \cong  P_n(\id_{L_h^f \mathcal{S}_\ast^{(p)}\langle d_h \rangle }) (L_h^f X).
\]
Here we used that precomposing with a functor preserving homotopy colimits and postcomposing with a functor preserving finite homotopy limits and filtered homotopy colimits commutes with the $n$-excisive approximation $P_n$.

Assuming that $X = \Sigma^2 X'$ for $X'$ a space with a $v_h$ self-map, we use (C) to obtain further equivalences
\[  P_n(\id_{L_h^f \mathcal{S}_\ast \langle d_h\rangle }) (L_h^f X) \cong  P_n(\id_{L_h^f \mathcal{S}_\ast \langle d_h\rangle}) (\Theta L_{T(h)} \Sigma^\infty X) \cong  P_n(\Theta ) (L_{T(h)} \Sigma^\infty X).
\]

Combining this with fact (A) and the straightforward observation that $\Phi_v$ preserves finite homotopy limits and filtered homotopy colimits, we finally get the chain of equivalences
\[\Phi_v P_n(X)  \cong \Phi_v P_n(\id_{\mathcal{S}_\ast^{(p)} \langle d_h \rangle})(X) \cong 
 \Phi_v L_h^f P_n(\id_{\mathcal{S}_\ast^{(p)} \langle d_h \rangle})(X)\cong P_n(\Phi_v \Theta ) (L_{T(h)} \Sigma^\infty X). \]

The composition of functors $\Phi_v\Theta$ is a functor from the category $\mathrm{Sp}_{T(h)}$ to itself and all such functors have split Goodwillie towers by results of Kuhn, see for example Theorem 1.1 of \cite{kuhntate}. This relies on the vanishing of $T(h)$-local Tate spectra.

We now apply these observations to the case where $h=1$ and where $X' = M^\ell$, which admits a $v_1$ self-map for $\ell \geq 3$. The integer $d_1$ can be taken to be $4$ in this case, by considering the type 2 complex $V_2$ which is the suspension of the cofiber of the $v_1$ self-map just mentioned. The analysis above shows that there is a weak equivalence of towers 
\begin{equation*}
\Phi_v P_n(M^{\ell +2}) \simeq P_n(\Phi_v \Theta)(L_{T(1)} \Sigma^\infty M^{\ell+2})
\end{equation*}
and the latter tower is split.
\end{proof}

We can finally prove our result on the $v_h$-periodic Goodwillie tower on a Moore space:
\begin{theorem} 
\label{thm:Moorespaces}
The $v_1$-periodic Goodwillie tower of a Moore space $S^\ell/p$ is infinite and fails to converge for $\ell \geq 5$ and $p$ an odd prime.
\end{theorem}

\begin{proof}
By the above discussion, the homotopy limit of the tower $\{\Phi_v P_n(M^\ell)\}_{n \geq 1}$ is the infinite product
\begin{equation*}
\prod_n \Phi_v \Omega^\infty \mathbf{D}_n(M^\ell).
\end{equation*}
Since infinitely many of the layers have nonvanishing homotopy groups (Lemma \ref{lem:Moorelayers}) and these homotopy groups are periodic, the pigeonhole principle applies again to guarantee the existence of an integer $j$ with
\begin{equation*}
\pi_j\bigl(\Phi_v \Omega^\infty \mathbf{D}_n(M^\ell)\bigr) \neq 0
\end{equation*}
for infinitely many values of $n$. By Cantor's diagonal slash, the group 
\begin{equation*}
\pi_j\bigl(\prod_n \Phi_v \Omega^\infty \mathbf{D}_n(M^\ell)\bigr)
\end{equation*}
must be uncountable. But this contradicts Thompon's calculation of the $v_1$-periodic homotopy groups of a Moore space in \cite{10}. Indeed, using the results of Cohen-Moore-Neisendorfer, he argues that the loop space of $M^\ell$ is a weak infinite product of certain spaces $S^m\{p\}$ and $T^m\{p\}$ and computes the $v_1$-periodic homotopy groups of these spaces. The result is his Theorem $1.1$, which in particular shows that these groups are countable.
\end{proof}

\begin{remark}
In Theorem 2.11 of \cite{heuts} it is in fact shown that $\Phi_v(M^\ell)$ is simply the direct \emph{sum} of all its homogeneous layers (and similarly for higher heights $h$ and appropriate type $h$ complexes in place of $M^\ell$); by the arguments above this can indeed not be equivalent to the infinite product. However, the arguments of \cite{heuts} require more technology than is necessary to include here.
\end{remark}
 


\begin{thebibliography}{plain}
\bibitem{ab} Arone, G., and Brantner, L. ``The action of Young subgroups on the partition complex.'' arXiv preprint arXiv:1801.01491 (2018).


\bibitem{2} Arone, G., and Mahowald, M. ``The Goodwillie tower of the identity functor and the unstable periodic homotopy of spheres." Inventiones mathematicae 135.3 (1999): 743-788.
\bibitem{3} Arone, G., and Kankaanrinta, M. ``The homology of certain subgroups of the symmetric group with coefficients in Lie (n)." Journal of Pure and Applied Algebra 127.1 (1998): 1-14.
\bibitem{BR} Behrens, M.J. and Rezk, C. ``The Bousfield-Kuhn functor and topological Andr\'{e}-Quillen cohomology.'' arxiv preprint arXiv:1712.03045 (2017).
\bibitem{BR2} Behrens, M.J. and Rezk, C. ``Spectral algebra models of unstable $v_n$-periodic homotopy theory.'' arXiv preprint arXiv:1703.02186 (2017).

\bibitem{bousfieldloc} Bousfield, A.K. ``Localization and periodicity in unstable homotopy theory.'' Journal of the American Mathematical Society 7.4 (1994): 831-873.
\bibitem{bousfieldtelescopic} Bousfield, A.K. ``On the telescopic homotopy theory of spaces.'' Transactions of the American Mathematical Society 353.6 (2001): 2391-2426.
\bibitem{ching} Ching, M. ``Bar constructions for topological operads and the Goodwillie derivatives of the identity." Geometry \& Topology 9.2 (2005): 833-934.
\bibitem{1} Goodwillie, T. G. ``Calculus III: Taylor Series." Geometry and Topology 7.2 (2003): 645-711.
\bibitem{heuts} Heuts, G.S.K.S. ``Lie algebras and $v_n$-periodic spaces.'' arXiv preprint arXiv:1803.06325 (2018).


\bibitem{ls} Langsetmo, L., and Stanley, D. . ``Nondurable K-theory equivalence and Bousfield localization. K-theory, 24(4) (2001), 397-410.
\bibitem{hilton} Hilton, P. J. ``On the homotopy groups of the union of spheres''. Journal of the London Mathematical Society, 1(2) (1955), 154-172.


\bibitem{6} Hopkins, M. J., and Smith, J. H. ``Nilpotence and stable homotopy theory II." Annals of mathematics 148.1 (1998): 1-49.
\bibitem{Joh} Johnson, B. ``The derivatives of homotopy theory." Transactions of the American Mathematical Society 347.4 (1995): 1295-1321.
\bibitem{kuhnmorava} Kuhn, N.J. ``Morava {$K$}-theories and infinite loop spaces''. Algebraic topology. Springer, Berlin, Heidelberg (1989). 243-257.
 


\bibitem{kuhngoodwillie} Kuhn, N.J. ``Goodwillie towers and chromatic homotopy: an overview.'' Proceedings of the Nishida Fest (Kinosaki 2003): 245--279.
\bibitem{kuhntate} Kuhn, N.J. ``Tate cohomology and periodic localization of polynomial functors.'' Inventiones Mathematicae 157.2 (2004): 345-370.
\bibitem{kuhn} Kuhn, N.J. ``A guide to telescopic functors.'' Homology, Homotopy and Applications 10.3 (2008): 291-319.
\bibitem{T} Lurie, J. A. ``Higher topos theory." No. 170. Princeton University Press, (2009).
\bibitem{A} Lurie, J. A. ``Higher algebra." (2017). Preprint available on the author's website.
\bibitem{milnor} Milnor, J. ``On the construction FK.'' Lecture Notes of the London Mathematical Society 4 (1972): 119-136.
\bibitem{mitchell} Mitchell, S.A. ``Finite complexes with $A(n)$-free cohomology.'' Topology 24.2 (1985): 227--246.
\bibitem{cubical} Munson, B.A., and Voli\'c, I. ``Cubical homotopy theory''. Vol. 25. Cambridge University Press (2015).
\bibitem{4} Neisendorfer, J. A. ``Algebraic methods in unstable homotopy theory". Vol. 12. Cambridge University Press (2010).
\bibitem{5} Neisendorfer, J. A. ``Samelson products and exponents of homotopy groups." Journal of Homotopy and Related Structures 8.2 (2013): 239-277.
\bibitem{pereira} Pereira, L.A. ``A general context for Goodwillie calculus.'' arXiv:1301.2832.
\bibitem{10} Thompson, R. D. ``Unstable $v_1$-periodic homotopy groups of a Moore space." Proceedings of the American Mathematical Society 107.3 (1989): 833-845.
\bibitem{20} Cohen, F. R., and Neisendorfer, J. A. ``Note on desuspending the Adams map." Mathematical Proceedings of the Cambridge Philosophical Society. Vol. 99. No. 01. Cambridge University Press (1986).


\end{thebibliography}
\end{document}